\begin{document}
\def\E{\ifmmode{\mathbb E}\else{$\mathbb E$}\fi} %natural numbers
\def\N{\ifmmode{\mathbb N}\else{$\mathbb N$}\fi} %natural numbers
\def\R{\ifmmode{\mathbb R}\else{$\mathbb R$}\fi} %real numbers
\def\Q{\ifmmode{\mathbb Q}\else{$\mathbb Q$}\fi} %rational numbers
\def\C{\ifmmode{\mathbb C}\else{$\mathbb C$}\fi} %complex numbers
\def\H{\ifmmode{\mathbb H}\else{$\mathbb H$}\fi} %complex numbers
\def\Z{\ifmmode{\mathbb Z}\else{$\mathbb Z$}\fi} %integers
\def\P{\ifmmode{\mathbb P}\else{$\mathbb P$}\fi} %real numbers
\def\T{\ifmmode{\mathbb T}\else{$\mathbb T$}\fi} %real numbers
\def\SS{\ifmmode{\mathbb S}\else{$\mathbb S$}\fi} %real numbers
\def\DD{\ifmmode{\mathbb D}\else{$\mathbb D$}\fi} %real numbers

\renewcommand{\a}{\alpha}
\renewcommand{\b}{\beta}
\renewcommand{\d}{\delta}
\newcommand{\D}{\Delta}
\newcommand{\e}{\varepsilon}
\newcommand{\g}{\gamma}
\newcommand{\G}{\Gamma}
\newcommand{\la}{\lambda}
\newcommand{\La}{\Lambda}
\newcommand{\n}{\nabla}
\newcommand{\var}{\varphi}
\newcommand{\s}{\sigma}
\newcommand{\Sig}{\Sigma}
\renewcommand{\t}{\tau}
\renewcommand{\th}{\theta}
\renewcommand{\O}{\Omega}
\renewcommand{\o}{\omega}
\newcommand{\z}{\zeta}
\newcommand{\B}{{\mathbb B}}

\newcommand{\ben}{\begin{enumerate}}
\newcommand{\een}{\end{enumerate}}
\newcommand{\be}{\begin{equation}}
\newcommand{\ee}{\end{equation}}
\newcommand{\bea}{\begin{eqnarray}}
\newcommand{\eea}{\end{eqnarray}}
\newcommand{\bc}{\begin{center}}
\newcommand{\ec}{\end{center}}

\newtheorem{thm}{Theorem}[section]
\newtheorem{cor}[thm]{Corollary}
\newtheorem{lem}[thm]{Lemma}
\newtheorem{prop}[thm]{Proposition}
\newtheorem{ax}{Axiom}
\newtheorem{conj}[thm]{Conjecture}

\theoremstyle{definition}
\newtheorem{defn}{Definition}[section]

\theoremstyle{remark}
\newtheorem{rem}{\rm\bfseries{Remark}}[section]
\newtheorem*{notation}{Notation}

\newtheorem{ques}{\rm\bfseries{Question}}[section]
\newtheorem{cons}[rem]{\rm\bfseries{Construction}}
\newtheorem{exm}[rem]{\rm\bfseries{Example}}

%\numberwithin{equation}{section}

      % Includes the theorem environments

%==========================================
%% Do not edit the following command
\setcounter{page}{1}
\volume{13}
%==========================================

\title[]{Group actions on $4$-manifolds: some recent results and open questions}
\author[]{Weimin Chen}

\thanks{The author was supported in part by NSF grant DMS-0603932}

% addresses of authors
\address{University of Massachusetts, Amherst}
\email{wchen@math.umass.edu}

\address{}
\email{}

\begin{abstract}
A survey of finite group actions on symplectic $4$-manifolds is given with a special emphasis on
results and questions concerning smooth or symplectic classification of group actions, group actions 
and exotic smooth structures, and homological rigidity and boundedness of group actions. 
We also take this opportunity to include several results and questions which did not appear elsewhere.
\end{abstract}
\keywords{Group actions; four-manifolds; symplectic}
\maketitle
\tableofcontents
\section{Introduction}
Let $X$ be a compact closed, oriented, simply connected, topological $4$-manifold, and let $G$ be
a finite group. Consider continuous actions of $G$ on $X$ which are orientation preserving, i.e., for 
every $g\in G$, the homeomorphism $g:X\rightarrow X$ is orientation preserving. We shall further 
assume that the actions of $G$ are locally linear, which means that for any $x\in X$ with 
nontrivial isotropy subgroup $G_x:=\{g\in G| g\cdot x=x\}\neq \{1\}$, the action of $G_x$ near $x$ is
modeled by a linear action, given by a faithful representation $\rho_x: G_x\rightarrow GL(4,\R)$. 
Note that $\rho_x$ is uniquely determined up to linear conjugation because nonlinear similarity 
begins in dimension $6$ (cf. \cite{CSSW}). For such an action of $G$ on $X$, one can associate to 
it the following data
$$
\Gamma:= (\rho, X^G, \{\rho_x\}),
$$
where $\rho:G\rightarrow \text{Aut } (H^2(X),\cup)$ is the induced action on the second cohomology 
(note that it preserves the cup product $\cup$ on $H^2(X)$), $X^G:=\{x\in X|G_x\neq \{1\}\}$ is the 
singular set of the action, and $\rho_x: G_x\rightarrow GL(4,\R)$ is the linear representation modeling 
the group action near $x$, $\forall x\in X^G$. The action is called {\it free} if $X^G=\emptyset$, {\it
semifree} if $G_x=G$ for all $x\in X^G$, and {\it pseudofree} if $X^G$ is a finite set. 
A good starting point for understanding the action of $G$
on $X$ is the associated data $\Gamma:= (\rho, X^G, \{\rho_x\})$. 

The fundamental case to consider is when $G\equiv\Z_p$ is a cyclic group of prime order $p>1$. In this
case $X^G$ is simply the set of points fixed under $G$, which is a disjoint union of isolated points 
and $2$-dimensional surfaces (because the action of $G$ preserves the orientation of $X$). For each 
$x\in X^G$, $G_x=G$ (i.e., the action is semifree), and moreover, if $x=m$ is an isolated 
fixed point, $\rho_x$ is given by an unordered pair of nonzero integers $(a_m,b_m)$ unique up to a simultaneous change of sign and congruence modulo $p$, and if $x\in Y$ lies in a fixed surface 
$Y$, $\rho_x$ is given by a nonzero integer $c_Y$, unique up to a change of sign and congruence 
modulo $p$. More precisely,
in the former case $\rho_x: \lambda\mapsto diag (\lambda^{a_m},\lambda^{b_m})$ and in the latter
case $\rho_x:\lambda\mapsto diag (1,\lambda^{c_Y})$, where $\lambda$ is a $p$-th root of unity. 
Finally, for $\rho:G\rightarrow \text{Aut } (H^2(X),\cup)$, the corresponding integral $\Z_p$-representation 
on $H^2(X)$ can be decomposed into a direct sum 
$$
H^2(X)=\Z[\Z_p]^r\oplus \Z^t\oplus \Z[\mu_p]^s
$$
for some integers $r,t,s\geq 0$, where the group ring $\Z[\Z_p]$ is the regular representation of 
$\Z$-rank $p$, $\Z$ is the trivial representation of $\Z$-rank $1$, and $\Z[\mu_p]$ is the representation 
of cyclotomic type of $\Z$-rank $(p-1)$, which is the kernel of the augmentation homomorphism $\Z[\Z_p]\rightarrow \Z$ (see \cite{KS}). (Here $\mu_p\equiv \exp(2\pi i/p)$.)

We collect below the main constraints that have to be satisfied by 
$\Gamma:= (\rho, X^G, \{\rho_x\})$ where $G\equiv\Z_p$. See \cite{CK1}, Section 3,  for a more
complete review. 

\begin{prop}
(1) $b_2(X)=rp+t+s(p-1)$.

(2) (Lefschetz fixed point formula) $\chi(X^G)=t-s+2$.

(3) (cf. \cite{Ed1})  If $X^G\neq \emptyset$, then $b_1(X^G;\Z_p)=s$.  In particular, there are no summands
of cyclotomic type in $H^2(X)$ (i.e., $s=0$) if the action is pseudofree with nonempty fixed point set. 

(4) (cf. \cite{Ed1}) Suppose $G=\langle g\rangle=\Z_2$. If $X^G$ is empty or a finite set, then 
$g^\ast \alpha\cup\alpha=0 \pmod{2}$ for all $\alpha\in H^2(X)$. 
\end{prop}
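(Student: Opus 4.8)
The plan is to handle (1) and (2) directly, (3) by Smith theory via the Borel construction, and (4) — the substantive part — via the transfer for the quotient together with the Wu formula.

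For (1): $b_2(X)$ is the $\Z$-rank of $H^2(X)$, and the three summand types $\Z[\Z_p]$, $\Z$, $\Z[\mu_p]$ have $\Z$-rank $p$, $1$, $p-1$, so $b_2(X)=rp+t+s(p-1)$. For (2): fix a generator $g$ of $G=\Z_p$. Since $X$ is simply connected and $g$ preserves orientation, $H^\ast(X;\mathbb{Q})$ is concentrated in degrees $0,2,4$ with $g^\ast$ trivial on $H^0$ and $H^4$, so the Lefschetz number is $L(g)=2+\mathrm{tr}(g^\ast \mid H^2(X;\mathbb{Q}))$. From $\mathbb{Q}[\Z_p]=\mathbb{Q}\oplus\mathbb{Q}[\mu_p]$ and the fact that $g$ permutes a basis of the regular representation by a single $p$-cycle one gets $\mathrm{tr}(g\mid \mathbb{Q}[\Z_p])=0$, hence $\mathrm{tr}(g\mid \mathbb{Q}[\mu_p])=-1$ and $\mathrm{tr}(g^\ast\mid H^2)=r\cdot0+t\cdot1+s\cdot(-1)=t-s$. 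The Lefschetz fixed point formula for the locally linear periodic homeomorphism $g$ gives $L(g)=\chi(\mathrm{Fix}\,g)=\chi(X^G)$, since $X^G=\mathrm{Fix}\,g$ when $G=\Z_p$; thus $\chi(X^G)=t-s+2$.

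For (3): note that $X^G$ is a disjoint union of isolated points and closed surfaces, so componentwise $\dim_{\Z_p}H^1(Z;\Z_p)=\tfrac12(\beta_p(Z)-\chi(Z))$, where $\beta_p$ is total mod-$p$ Betti number; summing and using (2), it suffices to prove $\beta_p(X^G)=t+s+2$. I would get this from the Borel construction $X_G=EG\times_G X$. On the one hand the localization theorem for the $\Z_p$-action makes $H^n_G(X;\Z_p)\to H^n_G(X^G;\Z_p)$ an isomorphism for $n$ large, so (using $\dim H^j(BG;\Z_p)=1$ and $\dim X^G\le2$) $\dim H^n_G(X;\Z_p)=\beta_p(X^G)$ for $n$ large. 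On the other hand, in the Serre spectral sequence of $X\to X_G\to BG$ one reduces the decomposition of $H^2(X)$ mod $p$ to $H^2(X;\Z_p)=\Z_p[G]^r\oplus\Z_p^t\oplus M^s$ with $M=\Z_p[G]/(N)$ ($N$ the norm); since $H^\ast(G;\Z_p[G])=\Z_p$ concentrated in degree $0$ while $H^\ast(G;M)\cong\Z_p$ in every degree (from $0\to\Z_p\xrightarrow{N}\Z_p[G]\to M\to0$), one computes $\sum_{p+q=n}\dim E_2^{p,q}=t+s+2$ for $n$ large. Only the rows $q=0,2,4$ are nonzero, so the only possible nonzero differentials are $d_3\colon E^{\ast,2}\to E^{\ast,0}$, $d_3\colon E^{\ast,4}\to E^{\ast,2}$ and $d_5\colon E^{\ast,4}\to E^{\ast,0}$. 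Because $X^G\neq\emptyset$, a global fixed point gives a section $BG\to X_G$, so $H^\ast(BG;\Z_p)\to H^\ast_G(X;\Z_p)$ is split injective and the bottom row $q=0$ is permanent; this kills the two differentials hitting it. Finally $d_3$ kills the $q=4$ row as well: Poincaré duality makes the fibre cohomology $H^2(X;\Z_p)$ a self-dual $\Z_p[G]$-module, so in high degrees the multiplicative structure of the spectral sequence expresses a generator of $E_2^{\ast,4}$ as a sum of products of classes in $E_2^{\ast,2}$, on which $d_3$ vanishes (they would map into the permanent bottom row), and $d_3$ is a derivation. Hence the spectral sequence degenerates in the relevant range, $\beta_p(X^G)=t+s+2$, and $b_1(X^G;\Z_p)=s$. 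If in addition the action is pseudofree with $X^G\neq\emptyset$, then $X^G$ is a nonempty finite point set, so $b_1(X^G;\Z_p)=0$ and $s=0$. The careful verification of the $d_3$-vanishing is the Smith-theoretic input I would quote from \cite{Ed1}.

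For (4): write $G=\langle g\rangle=\Z_2$ and let $X^G=\{m_1,\dots,m_k\}$ (possibly empty). Remove $G$-invariant open $4$-balls about the $m_i$ (on which $G$ acts by $-\mathrm{id}$) to obtain $X_0$, on which $G$ acts freely; then $\pi\colon X_0\to Y_0:=X_0/G$ is a connected double cover of a compact oriented $4$-manifold with $\partial Y_0$ a disjoint union of copies of $\mathbb{RP}^3$, $\pi^\ast\tau=1+g^\ast$, and $w_2(X_0)=\pi^\ast w_2(Y_0)$ since $\pi$ is a covering. All cohomology below is with $\Z_2$ coefficients. Because a ball has trivial reduced cohomology, restriction gives a $G$-equivariant isomorphism $H^2(X)\cong H^2(X_0)$ and an isomorphism $H^4(X)\cong H^4(X_0,\partial X_0)$ under which, since $H^1(\partial X_0)=0$, $g^\ast\alpha\cup\alpha$ corresponds to $g^\ast\tilde\alpha\cup\alpha$, where $\tilde\alpha\in H^2(X_0,\partial X_0)$ is the unique lift of $\alpha|_{X_0}$. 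The cohomology transfer $\tau$ of the pair of covers $(X_0,\partial X_0)\to(Y_0,\partial Y_0)$ is an isomorphism on $H^4(\,\cdot\,,\partial)$ — it corresponds under Lefschetz duality to the isomorphism $\pi_\ast\colon H_0(X_0)\to H_0(Y_0)$ — so it suffices to show $\tau(g^\ast\tilde\alpha\cup\alpha)=0$ in $H^4(Y_0,\partial Y_0)$. Using $g^\ast\tilde\alpha=\pi^\ast\tau(\tilde\alpha)+\tilde\alpha$, the projection formula, the relation $\tau(\alpha)=j^\ast\tau(\tilde\alpha)$, the relative Wu formula on the oriented $4$-manifolds $X_0$ and $Y_0$ (which gives $x\cup x=w_2\cup x$ for $x\in H^2(\,\cdot\,,\partial)$), and $w_2(X_0)=\pi^\ast w_2(Y_0)$, one computes
$$\tau(g^\ast\tilde\alpha\cup\alpha)=\tau(\tilde\alpha)\cup\tau(\alpha)+\tau(\tilde\alpha\cup\alpha)=w_2(Y_0)\cup\tau(\tilde\alpha)+w_2(Y_0)\cup\tau(\tilde\alpha)=0,$$
so $g^\ast\alpha\cup\alpha=0$ in $H^4(X;\Z_2)$ for all $\alpha$.

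The main obstacle is (4). The single cancellation above is easy once everything is in place, but one must carry the transfer, the projection formula, and the Wu formula through for the manifolds-with-boundary $X_0,Y_0$ and their boundary covers, and reconcile the relative product on $X_0$ with the absolute product on $X$; this bookkeeping — and the fact that it works only when $\partial X_0$ is a union of $3$-spheres, i.e.\ exactly when $X^G$ is finite — is where care is needed. In (3) the corresponding delicate point is the vanishing of the spectral-sequence differential $d_3$ on the top row, which I would ultimately take from \cite{Ed1}.
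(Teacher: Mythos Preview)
The paper does not actually prove this proposition: it is a survey, and Proposition~1.1 is stated as a collection of known constraints with references (parts (3) and (4) are attributed to Edmonds~\cite{Ed1}), with the reader directed to \cite{CK1}, Section~3, for further details. So there is no ``paper's own proof'' to compare against.

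That said, your argument is essentially the one in Edmonds~\cite{Ed1} and is correct. Parts (1) and (2) are routine. For (3), your Borel--spectral-sequence computation of $\beta_p(X^G)=t+s+2$ is the standard Smith-theoretic approach; you correctly identify the only delicate point as the vanishing of $d_3$ on the $q=4$ row, and honestly flag that this is where one appeals to \cite{Ed1}. (Your sketch via the multiplicative structure is right in spirit: once the $q=0$ row is permanent, the generator of $E_2^{0,4}$ is a cup product of invariant classes in $E_2^{0,2}$---this uses that the restriction of the intersection form to $(H^2)^G$ is nondegenerate mod $p$, which is where the hypothesis $X^G\neq\emptyset$ enters again---and then one propagates by multiplication with $E_2^{\ast,0}$.) For (4), your transfer/Wu-formula computation is exactly Edmonds' argument: after excising equivariant balls about the isolated fixed points to make the action free, the identity $\tau(g^\ast\tilde\alpha\cup\alpha)=\tau(\tilde\alpha)\cup\tau(\alpha)+\tau(\tilde\alpha\cup\alpha)$ together with the relative Wu formula on $X_0$ and $Y_0$ and $w_2(X_0)=\pi^\ast w_2(Y_0)$ collapses both terms to $w_2(Y_0)\cup\tau(\tilde\alpha)$, giving zero mod~$2$. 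Your remark that the argument breaks down precisely when $X^G$ acquires $2$-dimensional components (so that $\partial X_0$ is no longer a union of $3$-spheres and the relative bookkeeping fails) is also the correct diagnosis of why the hypothesis ``$X^G$ empty or finite'' is needed.
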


\begin{prop}
(1) (G-signature Theorem, \cite{HZ, Gor}) 
Assume each $2$-dimensional component $Y\subset X^G$ is orientable. 
Set $\text{Sign}(g,X)=tr(g)|_{H^{2,+}(X;\R)}-tr(g)|_{H^{2,-}(X;\R)}$, $\forall g\in G$. Then 
$$
\text{Sign}(g,X)=\sum_{m\in X^G} -\cot(\frac{a_m\pi}{p})\cdot 
                      \cot(\frac{b_m\pi}{p})+ \sum_{Y\subset X^G}
                      \csc^2(\frac{c_Y\pi}{p})\cdot (Y\cdot Y),
$$
where $Y\cdot Y$ denotes the self-intersection number of $Y$.

(2) (Weaker version) Denote by $\text{Sign}$ the signature of a space. Then
$$
p\cdot\text{Sign}(X/G)=\text{Sign}(X)
+\sum_{m\in X^G}\text{def}_m +\sum_{Y\subset X^G}\text{def}_Y,
$$
where the terms $\text{def}_m$ and $\text{def}_Y$ {\em(}called  {\em signature defect)} are given 
by the following formulae:
$$
\text{def}_m=\sum_{1\neq\lambda\in\C, \lambda^p=1}\frac{(1+\lambda^{a_m})(1+\lambda^{b_m})}
{(1-\lambda^{a_m})(1-\lambda^{b_m})}
$$
and 
$$
\text{def}_Y=\frac{p^2-1}{3}\cdot (Y\cdot Y).
$$
\end{prop}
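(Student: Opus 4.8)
The plan is to obtain part (1) as the specialization to $G\equiv\Z_p$ acting on a closed oriented $4$-manifold of the Atiyah--Singer $G$-signature theorem (see \cite{HZ, Gor}), and then to derive part (2) from part (1) by averaging over $G$.

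\medskip\noindent\emph{Part (1).} The $G$-signature theorem asserts that, for an orientation-preserving diffeomorphism $g$ of finite order of a closed oriented manifold, $\text{Sign}(g,X)$ is a sum of local contributions, one for each connected component $F$ of the fixed-point set $X^g$, each contribution being a characteristic-number integral over $F$ assembled from the $L$-class of $F$ and the equivariant data of the normal bundle $\nu_F$. In our case $X^g=X^G$ is a disjoint union of isolated points $m$ and (by hypothesis orientable) surfaces $Y$. At an isolated fixed point $m$, writing $\lambda$ for the primitive $p$-th root of unity attached to $g$, the action on $T_mX\cong\C^2$ is $diag(\lambda^{a_m},\lambda^{b_m})$ and the local term is the classical product $\frac{(1+\lambda^{a_m})(1+\lambda^{b_m})}{(1-\lambda^{a_m})(1-\lambda^{b_m})}$; using $\frac{1+e^{i\theta}}{1-e^{i\theta}}=i\cot(\theta/2)$ together with $i^2=-1$, this equals $-\cot(\frac{a_m\pi}{p})\cot(\frac{b_m\pi}{p})$ when $g$ is a generator. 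Along a fixed surface $Y$, the normal bundle $\nu_Y$ is a complex line bundle on which $g$ acts by $\lambda^{c_Y}$; since $\dim Y=2$ the $L$-class of $Y$ contributes only in degree $0$ (its degree-$2$ part $\frac13 p_1(TY)$ lies in $H^4(Y)=0$), so only the degree-$2$ part of the equivariant normal factor survives, and a short power-series (equivalently, residue) computation identifies its pairing with $[Y]$ as $\csc^2(\frac{c_Y\pi}{p})\,(Y\cdot Y)$, where $Y\cdot Y=\langle e(\nu_Y),[Y]\rangle$. Summing the local terms over $X^G$ gives the stated formula.

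\medskip\noindent\emph{Part (2).} I would apply part (1) to each $g\ne1$. Fix a generator $g_0$ so that the local data at $m$ and at $Y$ are $(a_m,b_m)$ and $c_Y$; then for $g=g_0^k$, $1\le k\le p-1$, they are $(ka_m,kb_m)$ and $kc_Y$. Summing $\text{Sign}(g,X)$ over $g\ne1$ and regrouping: the isolated-point terms reassemble, via the identity $\frac{1+\lambda}{1-\lambda}=i\cot(\theta/2)$, into $\sum_m\text{def}_m$, because $\lambda=e^{2\pi ik/p}$ runs over all $p$-th roots of unity $\ne1$ as $k=1,\dots,p-1$; the surface terms become $\sum_Y\big(\sum_{k=1}^{p-1}\csc^2(\frac{kc_Y\pi}{p})\big)(Y\cdot Y)$, and since $\gcd(c_Y,p)=1$ the residues $kc_Y\bmod p$ run over $1,\dots,p-1$, so the inner sum equals the classical value $\sum_{k=1}^{p-1}\csc^2(\frac{k\pi}{p})=\frac{p^2-1}{3}$, yielding $\sum_Y\text{def}_Y$. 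On the other side, $\text{Sign}(1,X)=\text{Sign}(X)$; and since $H^\ast(X/G;\R)\cong H^\ast(X;\R)^G$ compatibly with cup products, while $X/G$ is a rational homology manifold whose signature is that of the cup form on $H^2(X/G;\R)$ (which, up to the positive scalar coming from $\pi_\ast[X]=p\,[X/G]$, is the restriction of the cup form on $H^2(X;\R)$), one gets $\text{Sign}(X/G)=\dim(H^{2,+}(X;\R))^G-\dim(H^{2,-}(X;\R))^G=\frac1p\sum_{g\in G}\text{Sign}(g,X)$, using $\dim V^G=\frac1p\sum_{g\in G}tr(g|_V)$ applied to the $G$-invariant maximal positive/negative subspaces. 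Splitting off the $g=1$ term, $\sum_{g\in G}\text{Sign}(g,X)=\text{Sign}(X)+\sum_{g\ne1}\text{Sign}(g,X)$, and combining the two computations gives $p\,\text{Sign}(X/G)=\text{Sign}(X)+\sum_m\text{def}_m+\sum_Y\text{def}_Y$.

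\medskip
The real obstacle is the local analysis in part (1): extracting from the general $G$-signature formula, with the correct normalization and sign, the isolated-point contribution, and -- more delicately -- checking that the tangential $L$-class of a fixed surface drops out while the surviving normal-bundle coefficient is exactly $\csc^2(\frac{c_Y\pi}{p})$. Once that is in place, part (2) is pure bookkeeping together with the two elementary identities $\frac{1+e^{i\theta}}{1-e^{i\theta}}=i\cot(\theta/2)$ and $\sum_{k=1}^{p-1}\csc^2(\frac{k\pi}{p})=\frac{p^2-1}{3}$; the latter follows, for instance, from $\sum_{k=1}^{p-1}\cot^2(\frac{k\pi}{p})=\frac{(p-1)(p-2)}{3}$ together with $\csc^2=1+\cot^2$, or from the logarithmic derivative of $z^p-1=\prod_{k=0}^{p-1}(z-e^{2\pi ik/p})$.
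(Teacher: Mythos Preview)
The paper does not supply a proof of this proposition: it is stated as background with citations to \cite{HZ, Gor}, and part (2) is labeled a ``weaker version'' of part (1) without further argument. Your sketch is the standard derivation one finds in those references---specializing the Atiyah--Singer $G$-signature formula to dimension four for part (1), then averaging over $G$ and invoking the trigonometric identity $\sum_{k=1}^{p-1}\csc^2(k\pi/p)=(p^2-1)/3$ for part (2)---and it is correct as an outline. There is nothing to compare against in the paper itself.
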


It turns out that locally linear topological actions are largely determined by the associated data
$\Gamma=(\rho, X^G, \{\rho_x\})$, at least for the case where $G\equiv\Z_p$ and the action is pseudofree
(or free). First of all, there is the following realization result (for odd $p$) due to Edmonds and Ewing \cite{EE2}.

\begin{thm}
(Edmonds-Ewing \cite{EE2})
Let a closed, oriented, simply connected $4$-manifold $X$ be given, together with a representation
of $\Z_p$ on $H^2(X)$ of the form $H^2(X)=\Z[\Z_p]^r\oplus \Z^t$, preserving the cup product, and with
a candidate fixed point data for $t+2$ isolated fixed points satisfying the $G$-signature theorem and
an additional torsion condition (which vanishes for relatively small $p$). Then there is a locally linear
topological $\Z_p$-action on $X$ realizing the given data. 
\end{thm}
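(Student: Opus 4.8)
The plan is to reduce the realization problem to an equivariant handlebody construction plus a piece of algebra over the group ring $\Z[\Z_p]$, and then to invoke Freedman's classification of simply connected topological $4$-manifolds to recognize the outcome as $X$. First I would dispose of the neighborhoods of the fixed points. Around each of the prescribed $t+2$ fixed points place the linear model of $\Z_p$ on $\C^2$ given by $\lambda\cdot(z_1,z_2)=(\lambda^{a_m}z_1,\lambda^{b_m}z_2)$, whose boundary quotient is the lens space $L(p;a_m,b_m)$. Excising these equivariant $4$-balls reduces the problem to constructing a \emph{free} $\Z_p$-action on a compact $4$-manifold $Z$ with $\partial Z=\bigsqcup_m S^3$, the action on the $m$-th sphere being the given linear one, such that filling the cones back in recovers $X$ together with the prescribed action on $H^2$. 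Equivalently, one builds the orbit space: a compact $4$-manifold $V$ with $\partial V=\bigsqcup_m L(p;a_m,b_m)$ together with an epimorphism $\pi_1(V)\to\Z_p$ that restricts to the standard covering homomorphism on each boundary lens space.

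Next I would build $V$, equivalently $Z$, by equivariant surgery on the collar $\bigsqcup_m(S^3\times I)$: attach equivariant $1$-handles to connect the pieces and, if necessary, to arrange that the fundamental group of the eventual closed-up cover dies, then attach equivariant $2$-handles --- orbits under $\Z_p$ of ordinary $D^2\times D^2$ handles --- along invariant framed links in the free part. The attaching data is chosen so that the resulting equivariant intersection form on $H^2(Z)$, viewed as a form over $\Z[\Z_p]$, is the prescribed $\Z[\Z_p]^r\oplus\Z^t$. Gluing the equivariant cones on the boundary lens spaces back in, and capping their $4$-ball covers upstairs, produces a $\Z_p$-space $\widetilde X$; one checks that $\pi_1(\widetilde X)=1$ and that $H^2(\widetilde X)$, with its cup product and $\Z_p$-action, is the given data. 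Since $X$ and $\widetilde X$ are then simply connected, closed, oriented topological $4$-manifolds with isomorphic intersection forms, Freedman's theorem identifies $\widetilde X\cong X$ once their Kirby--Siebenmann invariants agree, which can be arranged since both are smoothable away from the cone points.

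The crux is the algebra hidden in the previous step: one must show the prescribed module-with-pairing $\Z[\Z_p]^r\oplus\Z^t$ is realizable as the equivariant intersection form of a $2$-handlebody whose \emph{boundary} is the prescribed disjoint union of lens spaces. This involves three layers of constraint. The numerical layer --- that the ordinary signature of the form equal $\mathrm{Sign}(X)$ and, character by character, that $\mathrm{Sign}(g,X)$ be the sum of the cotangent contributions of the rotation numbers --- is precisely the $G$-signature hypothesis stated above, specialized to the case with no fixed surfaces. The second layer is a compatibility between the isomorphism type of the form over the rationals and the linking-form and Reidemeister-torsion data that the boundary lens spaces impose. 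The third and genuinely subtle layer is an obstruction living in a reduced projective class group of $\Z[\Z_p]$ --- closely related to the ideal class group of the $p$-th cyclotomic field: a unimodular $\Z[\Z_p]$-lattice carrying the prescribed isomorphism type together with the prescribed nonsingular ``boundary reduction'' of its torsion need not exist, and the residual obstruction --- assembled from the fixed-point rotation data once the $G$-signature constraint is imposed --- is exactly the stated \emph{torsion condition}, which vanishes when the relevant class group of $\Z[\Z_p]$ vanishes, i.e.\ for relatively small $p$.

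I expect this algebraic step --- manufacturing the $\Z[\Z_p]$-lattice with the correct boundary behavior, and showing that the obstruction to doing so is captured by the stated torsion condition --- to be the main obstacle. By contrast, once the algebraic form is in hand, realizing it by an equivariant handlebody is routine (any automorphism of a $\Z[\Z_p]$-form can be effected by handle slides and the introduction of cancelling handle pairs), and the concluding recognition of $X$ by gluing in the cones and invoking Freedman is formal.
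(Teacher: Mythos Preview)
The paper does not contain a proof of this statement at all: Theorem~1.3 is stated as a result of Edmonds and Ewing with a citation to \cite{EE2}, and the survey moves on immediately. There is therefore nothing in the paper to compare your proposal against.

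That said, your outline is a reasonable sketch of the strategy in the original Edmonds--Ewing paper: excise equivariant balls around the candidate fixed points to reduce to a free action with lens-space boundaries, work over $\Z[\Z_p]$ to realize the prescribed equivariant intersection form subject to the boundary constraints, and close up using Freedman's classification. Your identification of the three layers of constraint --- the $G$-signature formula, compatibility with the boundary linking/torsion data, and a residual obstruction in a class group associated to $\Z[\Z_p]$ that vanishes for small $p$ --- matches the structure of the actual argument. The genuinely hard part, as you correctly anticipate, is the algebraic realization step over $\Z[\Z_p]$; your sketch gestures at it but does not carry it out, so this remains a plan rather than a proof. If you want feedback on the details you would need to consult \cite{EE2} directly, since the present survey offers none.
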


The classification up to topological conjugacy was obtained by Wilczynski \cite{Wil2} and 
Bauer-Wilczynski \cite{BW}, showing in particular that there are at most finitely many different conjugacy classes for a given data $\Gamma=(\rho, X^G,\{\rho_x\})$. Their work generalized earlier work of Hambleton-Kreck \cite{HK} on free actions.

Historically, the study of group actions has often focused on the investigation of certain manifolds
which carry a natural geometric structure, where the geometric structure admits a large group
of automorphisms. The most important example in this regard is the unit sphere of an Euclidean space, which inherits a linear structure from the ambient space. In the examples below, we shall examine 
some natural group actions in dimension $4$. 

\begin{exm}
(1) Consider $\SS^4$, the unit sphere in $\R^5$. Writing $\R^5=\C^2\times\R$, then every orientation-preserving linear $\Z_p$-action on $\SS^4$ is conjugate to one of the following: 
$$
\lambda\cdot (z_1,z_2,x)\mapsto (\lambda z_1, \lambda^q z_2,x), \mbox{ where }
\lambda^p=1, 0\leq q<p. 
$$
In the above model, the fixed point set consists of two isolated points, the north pole $(0,0,1)$ and the
south pole $(0,0,-1)$, when $q\neq 0$, and is given by the embedded $2$-sphere $z_1=0$ when $q=0$.
The local representations $\rho_x$ are given by $\lambda\cdot (z_1,z_2)\mapsto (\lambda z_1, \lambda^q z_2)$, $\lambda^p=1$, and since $H^2(\SS^4)=0$, the induced representation $\rho$ on $H^2(\SS^4)$ is
trivial.

Though we are mainly concerned with orientation-preserving actions in this article, we point out that
the only orientation-reversing linear action on $\SS^4$ is the free involution given by the antipodal map. 

(2) Consider the complex projective plane $\C\P^2$. The models for complex linear $\Z_p$-actions are 
$$
\lambda\cdot [z_0 : z_1 : z_2]\mapsto [z_0 :\lambda^a z_1:\lambda^b z_2], \mbox{ where }
\lambda^p=1, 0\leq a<b<p. 
$$
These actions are all orientation-preserving and have a trivial representation on $H^2(\C\P^2)=\Z$.
The fixed point set consists of three isolated points $[0 : 0 : 1], [0: 1: 0]$, and $[1 : 0 : 0]$ when 
$a\neq 0$, with $\rho_x$ given by $(-b,a-b)$, $(-a, b-a)$ and $(a,b)$. When $a=0$ (in which case 
one can assume $b=1$), the fixed point
set consists of one isolated point $[0 : 0 : 1]$ and an embedded $2$-sphere, the 
complex line $z_2=0$. Note that
when $p=2$, $a=0$ is the only possibility. 

The anticomplex linear action is the involution $[z_0 : z_1 : z_2]\mapsto [\bar{z}_0 :\bar{z}_1:\bar{z}_2]$,
with the fixed point set being $\R\P^2=\{[x_0 : x_1 : x_2]|x_i\in\R\}$. The representation $\rho$
on $H^2(\C\P^2)$ is given by multiplication by $-1$, and the action preserves the orientation.
\end{exm}

\begin{ques}
Is every $\Z_p$-action on $\SS^4$ or $\C\P^2$ conjugate to a linear action?
\end{ques}

In the topological category the answer is yes for pseudofree actions; the case of $\SS^4$ was due to
Kwasik and Schultz \cite{KS1}, and the case of $\C\P^2$ was due to Edmonds and Ewing \cite{EE1}
for the local representations (see also Hambleton-Lee \cite{HL}) and to Hambleton, Lee and Madsen \cite{HLM} for the classification (see also Wilczynski \cite{Wil1}). 

For non-pseudofree actions the answer is no. Giffen \cite{Gif} gave the first examples of $\Z_p$-actions
on $\SS^4$ having a knotted $\SS^2$ as the fixed point set. Hambleton, Lee and Madsen \cite{HLM} used Giffen's examples to produce nonlinear $\Z_p\times \Z_p$-actions on $\C\P^2$, but no examples of nonlinear $\Z_p$-actions on $\C\P^2$ are known. 

For free involutions on $\SS^4$, there is a long story and the answer is no in both topological 
and smooth categories. 
Fintushel and Stern \cite{FS} gave the first example of an involution on $\SS^4$ whose quotient 
is a fake smooth $\R\P^4$. Later Gompf \cite{Gom} showed that one of the earlier examples of fake
smooth $\R\P^4$'s due to Cappell and Shaneson \cite{CS} has universal covering diffeomorphic to $\SS^4$;
recently Akbulut \cite{A} has shown that all of them have universal covering diffeomorphic to $\SS^4$. 

In Section 2, we discuss several results related to Question 1.1 on symplectic actions. 

\begin{ques}
Given a general simply connected $4$-manifold $X$, how complex is the set of associated data
$\Gamma=(\rho, X^G,\{\rho_x\})$ of all $\Z_p$-actions on $X$?
\end{ques}

First, we note that for a $4$-manifold $X$ which is not one of those with a relatively simple 
intersection form $(H^2(X),\cup)$ (e.g., $\SS^4$, $\C\P^2$, etc.),  it is already a complicated 
algebraic problem to describe the representation $\rho$ induced by a $\Z_p$-action. 

\begin{exm}
A $K3$ surface is a simply connected complex surface with trivial canonical bundle. All $K3$ surfaces
are K\"{a}hler, and have the same diffeomorphism type as smooth $4$-manifolds \cite{BPV}. 
An example of a $K3$ surface is given by the Fermat quartic $X_0=\{z_0^4+z_1^4+z_2^4+z_3^4=0\}
\subset \C\P^3$. The intersection form of a $K3$ surface is 
$$
H^2(X) = H\oplus H\oplus H\oplus E_8(-1)\oplus E_8(-1),
$$
known as the $K3$ lattice. 

Finite groups of automorphisms of a $K3$ surface have been extensively studied, particularly for 
the case of symplectic automorphisms, largely due to work of Nikulin \cite{N} and Mukai \cite{Mu}
--- these are the ones which act trivially on the canonical bundle. Nikulin classified the Abelian groups
and showed that $\Z_p$ for $p=2,3,5,7$ are the only cyclic groups of a prime order. He also showed 
that the representation $\rho$ is uniquely determined up to conjugacy; this allows one to determine 
$\rho$ by examining certain specific examples, cf. \cite{Mo, GS}. For $p=2$, it is fairly easy to 
describe it ---
$\rho$ is the representation which fixes the $3$ copies of $H$ and switches the $2$ copies of $E_8(-1)$.
For $p\geq 3$, it becomes fairly complicated to describe them. One often does it by describing the  
invariant lattice $H^2(X)^{\Z_p}$ and its orthogonal complement $\Omega_p:=(H^2(X)^{\Z_p})^\perp$.
For example, for $p=3$, $H^2(X)^{\Z_p}=H\oplus H(3)\oplus H(3)\oplus A_2\oplus A_2$ and 
$\Omega_p=K_{12}(-2)$, the Coxeter-Todd lattice of rank $12$ with the bilinear form multiplied by 
$-2$.

Not much is known about the set of representations of $\Z_p$ on the $K3$ lattice which are induced 
by a pseudofree $\Z_p$-action. Besides the examples realized by a symplectic automorphism, a few 
more can be found in \cite{CK1}, Thm 1.8.  Compare also Question 3.5. 
\end{exm}

Toward Question 1.2, one often focused on the following two specific questions: homological rigidity
and boundedness of group actions. More precisely, homological rigidity concerns whether the 
representation $\rho$ is faithful, i.e., whether an action is trivial if the induced action on homology 
is trivial, while boundedness of actions asks whether a given manifold 
could admit group actions of arbitrarily large order.  We should point out that these rigidity 
questions were also motivated by the corresponding results in dimension
$2$ (assuming the genus of the Riemann surface is at least two), both of which were due to 
Hurwitz. 

In dimension 4, holomorphic actions tend to be rigid. $K3$ surfaces provide 
a classical example of homological
rigidity; every homologically trivial automorphism of a $K3$ surface is trivial
\cite{BPV}. Peters \cite{Peters} extended
homological rigidity to elliptic surfaces (with a few exceptions), including in particular all
simply connected elliptic surfaces. Concerning boundedness of actions, Hurwitz's theorem was extended
to algebraic surfaces of general type, where the optimal bound was attained by Xiao \cite{Xiao1, Xiao2}. 

On the other hand, the following result of Edmonds \cite{Ed} shows that topological actions are rather
flexible and occur in abundance. 

\begin{thm}
(Edmonds \cite{Ed})
Let a closed, simply connected $4$-manifold $X$ be given. For any prime number $p>3$, there exists
a locally linear, pseudofree and homologically trivial, topological $\Z_p$-action on $X$.  
\end{thm}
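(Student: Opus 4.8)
The plan is to reduce the problem to the realization theorem of Edmonds--Ewing (the theorem of this excerpt). That result says: given $X$, a cup-product-preserving $\Z_p$-representation on $H^2(X)$ of the form $\Z[\Z_p]^r\oplus\Z^t$, and candidate fixed point data at $t+2$ isolated points satisfying the $G$-signature theorem plus a torsion condition that vanishes for small $p$, there is a locally linear topological pseudofree $\Z_p$-action on $X$ realizing it. Since we want the action to be homologically trivial, we must take the representation on $H^2(X)$ to be trivial; this forces $r=0$ and $t=b_2(X)$, so the sought action must have exactly $b_2(X)+2$ isolated fixed points and no fixed surfaces. The whole problem thus becomes: \emph{for $p>3$, produce rotation numbers $(a_i,b_i)$, $i=1,\dots,b_2(X)+2$, of isolated fixed points satisfying the $G$-signature formula (with all $Y\cdot Y$ terms absent and $\mathrm{Sign}(g,X)=\mathrm{Sign}(X)$ since $\rho$ is trivial) and the Edmonds--Ewing torsion condition.} First I would set up this arithmetic reformulation carefully, recording that the $G$-signature constraint reads
\[
\mathrm{Sign}(X)=\sum_{i=1}^{b_2(X)+2} -\cot\!\Big(\frac{a_i\pi}{p}\Big)\cot\!\Big(\frac{b_i\pi}{p}\Big),
\]
together with the $\bmod\, p$ torsion refinement.

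The core of the argument is then a number-theoretic existence statement: I would look for a small, explicit ``library'' of local linear models whose cotangent contributions are easy to control, and assemble $b_2(X)+2$ of them so that the sum of the cotangent terms equals $\mathrm{Sign}(X)$. A natural device is to work with \emph{cancelling pairs}: two fixed points with data $(a,b)$ and $(a,-b)$ contribute $-\cot(a\pi/p)\cot(b\pi/p)$ and $+\cot(a\pi/p)\cot(b\pi/p)$, summing to zero (and similarly cancelling in the torsion term). Thus I can pad the fixed point set with arbitrarily many such pairs without changing either side of the constraint, which gives the freedom to hit any prescribed count $b_2(X)+2$ of fixed points once I have found a \emph{single} small configuration whose signature contribution equals $\mathrm{Sign}(X)$. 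Reducing to that single configuration: one needs, for each integer $\sigma$ (the value $\mathrm{Sign}(X)$, which can be any integer congruent to $b_2(X)$ mod $2$ after accounting for parity), a bounded list of rotation numbers realizing it. Here I would use the identity-type fact that $\cot(\pi/p)\cot((p-1)\pi/p)=-\cot^2(\pi/p)$ and that by choosing $a$ suitably one can make a single term $-\cot(a\pi/p)\cot(b\pi/p)$ equal to $\pm 1$ (e.g.\ $b\equiv$ a quarter-period analogue), or more robustly invoke the surjectivity of the ``signature defect'' map onto the appropriate residue/integer range — this is essentially the content that underlies Edmonds--Ewing's own existence discussion, and I would cite it rather than re-derive it.

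The remaining ingredient is the torsion condition, which Edmonds--Ewing express as the vanishing of a class in $\widehat{H}^0(\Z_p;-)$ or a $K$-theoretic Reidemeister-torsion-type obstruction built from the rotation numbers modulo $p$; the point $p>3$ in the statement is exactly to ensure there is enough room to arrange it. I would handle this in two stages: first observe that cancelling pairs $(a,b),(a,-b)$ contribute trivially to the torsion invariant, so it suffices to arrange the torsion condition for the small core configuration; second, use the hypothesis $p>3$ together with an explicit choice — the cleanest is to take, among the $b_2(X)+2$ points, a ``fundamental'' triple or pair modeled on the linear $\C\P^2$ or $S^4$ actions of Example 1.1, whose torsion class is known to vanish, and let the cancelling pairs absorb the rest. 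I expect the \textbf{main obstacle} to be precisely this torsion bookkeeping: verifying that the concrete configuration one writes down actually kills the Edmonds--Ewing $K$-theoretic obstruction (and not merely the $G$-signature congruence), and checking the edge cases where $b_2(X)$ is small (so that there are very few points to play with) and where $p$ is just above $3$ (so the obstruction group is nontrivial). If the direct approach stalls, the fallback is to prove it first for $X$ with definite or small intersection form and then use that any simply connected $X$ is, after connected sum with $\pm\C\P^2$'s, equivariantly comparable — but since homological triviality must be preserved one cannot literally connect-sum equivariantly, so I would instead lean entirely on the Edmonds--Ewing realization theorem and push all the work into the arithmetic of rotation numbers.
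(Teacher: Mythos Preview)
The paper does not contain a proof of this theorem; it is a survey that simply cites Edmonds' 1987 paper for the result. So there is no ``paper's own proof'' to compare your proposal against.

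That said, a few remarks on your approach versus the actual history. Edmonds' original construction predates the Edmonds--Ewing realization theorem (1992) by several years and does not go through it; he builds the actions directly by equivariant handle/surgery arguments. Your plan is to run the logic in reverse and extract the homologically trivial case from the later, more general realization machinery. That is legitimate in principle, but as you yourself note near the end, the substantive content then collapses entirely into ``produce admissible rotation data,'' and your sketch of how to do that has a real gap.

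Concretely: your two building blocks are $\SS^4$-type cancelling pairs $(a,b),(a,-b)$ (two points, signature contribution $0$) and $\C\P^2$-type triples (three points, signature contribution $\pm 1$). Suppose $X$ is a homotopy $K3$, so $b_2=22$, $\text{Sign}(X)=-16$, and you need $24$ fixed points with cotangent sum $-16$. Writing the configuration as $k$ positive triples, $l$ negative triples, and $m$ cancelling pairs gives $k-l=-16$ and $3(k+l)+2m=24$; the first forces $k+l\geq 16$ while the second forces $k+l\leq 8$. So these blocks alone cannot do the job, and your fallback (``invoke surjectivity of the signature-defect map \dots\ and cite it'') is exactly the nontrivial number theory that constitutes the proof. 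The torsion condition you flag as the main obstacle is a second, separate issue, but the $G$-signature arithmetic already fails at the level of your outline.
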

(However, for non-Abelian groups McCooey \cite{McC} established 
homological rigidity for locally linear topological actions. ) 

In Section 4, we will discuss some results toward rigidity for symplectic actions. 

\vspace{2mm}

Finally, we consider the following question which is only valid for smooth actions. 

\begin{ques}
Given a smooth $4$-manifold, how much is the smooth structure reflected by its finite groups of
(smooth) symmetries and vice versa? 
\end{ques}

In higher dimensions, this question was extensively studied in the case of homotopy spheres 
(cf. \cite{Sch}). In Section 3, we will discuss some results in this direction for the case of homotopy 
$K3$ surfaces (smooth manifolds homeomorphic to a $K3$ surface). 

\vspace{2mm}

The main technical tools for obtaining these results are Seiberg-Witten invariants and pseudoholomorphic
curves, which are suitably adapted for the purpose of group actions. The readers are referred to the survey article \cite{C1} for an overview and the individual papers for more details.

\section{Classificational results}
Consider a smooth action of a finite group $G$ on $\SS^4$ which has an isolated fixed point $x\in \SS^4$.
We are concerned with the question as whether the action is smoothly conjugate to a linear action. 
By taking out an invariant neighborhood of $x$, we obtain a smooth action of $G$ on the $4$-ball
$\B^4$ where the action is free and linear on the boundary. It is clear that the linearity of the action 
on $\SS^4$ is equivalent to the linearity of the action on $\B^4$. Such a group $G$ which acts freely
and linearly on $\SS^3=\partial \B^4$ has been classified, cf. e.g. \cite{Wolf}. In particular, its center is
nontrivial,  hence contains a cyclic subgroup of prime order. Applying the Smith theory \cite{Bre} to
this subgroup, it follows easily that the $G$-action on $\B^4$ is semifree and has a unique fixed point
$y\in\B^4$. In the complement of an invariant neighborhood of $y$, the action is free with quotient 
an $h$-cobordism of $\SS^3/G$, which, according to \cite{KS2}, is in fact an $s$-cobordism. Hence
our problem is now reduced to whether a smooth $s$-cobordism of elliptic $3$-manifolds $\SS^3/G$
is necessarily trivial provided that its universal covering is trivial. There are nontrivial topological
$s$-cobordisms of elliptic $3$-manifolds \cite{CS1, KS1}. Some potentially nontrivial smooth examples 
were given by Cappell and Shaneson \cite{CS2};  one of the Cappell-Shaneson examples in \cite{CS2}
was shown by Akbulut \cite{Ak} to have a trivial universal covering. 

In \cite{C2, C3} we showed that a symplectic $s$-cobordism of elliptic $3$-manifolds (with standard
structure near the boundary) is smoothly trivial. Using a theorem of Gromov \cite{Gr} which says that
a symplectic structure on $\B^4$ is standard if it is standard near the boundray, we can reformulate 
the results in \cite{C2, C3} as follows.

\begin{thm}
A smooth action of a finite group $G$ on $\B^4$, which is free and linear on the boundary and 
preserves the standard symplectic structure on $\B^4$, is conjugate to a linear 
action by a diffeomorphism equaling identity near the boundary. 
\end{thm}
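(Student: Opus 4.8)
\section*{Proof proposal}

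The plan is to feed the symplectic hypothesis into the reduction already carried out above. Recall that the classification of finite groups acting freely and linearly on $\SS^3$, together with Smith theory \cite{Bre}, has reduced us to the case where the $G$-action on $\B^4$ is semifree with a single interior fixed point $y$; what must be shown is that this action is smoothly conjugate, by a diffeomorphism equal to the identity near $\partial\B^4$, to the cone on the given linear action on $\SS^3=\partial\B^4$. First I would apply the \emph{equivariant Darboux theorem}: since the $G$-action preserves the standard form $\omega_0$ and fixes $y$, a suitable neighborhood of $y$ is $G$-equivariantly symplectomorphic to a neighborhood of the origin in $(T_y\B^4,\omega_0|_y)$ equipped with the constant form and the linearized action $\rho_y$. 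In particular one can choose a small invariant round ball $B\ni y$ on which the action is the standard linear symplectic action $\rho_y$; the action on $\partial B\cong\SS^3$ is then free and linear.

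Next I would remove the interior of $B$. The compact remainder $W=\B^4\setminus\operatorname{int}B$ carries a free $G$-action and has two boundary components, the outer $\SS^3=\partial\B^4$ with the given linear action and $\partial B$ with $\rho_y$; near each component $\omega_0$ is the symplectization of the standard contact structure on $\SS^3$ (globally standard near $\partial\B^4$, linear near $\partial B$). As in the discussion above, $W$ is topologically $\SS^3\times[0,1]$, so the quotient $\bar W=W/G$ is a symplectic $h$-cobordism between the elliptic $3$-manifolds $\SS^3/G$ and $\SS^3/\rho_y(G)$, with $\omega_0/G$ standard near the boundary; by \cite{KS2} it is in fact an $s$-cobordism. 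Now I invoke the main result of \cite{C2, C3}: a symplectic $s$-cobordism of elliptic $3$-manifolds with standard structure near the boundary is smoothly trivial. Hence $\bar W$ is a product rel boundary, and lifting to universal covers, $W$ is $G$-equivariantly diffeomorphic, by a diffeomorphism equal to the identity near $\partial W$, to $\SS^3\times[0,1]$ with the product of the given linear action and the trivial action on $[0,1]$; in particular $\rho_y$ is conjugate to the given linear action.

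It remains to glue. Writing $\B^4=W\cup_{\partial B}B$ and using that the product structure on $W$ is the identity near $\partial B$, it patches with the linear cone structure on $B$ to exhibit the $G$-action on $\B^4$ as the cone on the given linear $\SS^3$-action, i.e.\ a linear action, the conjugating diffeomorphism being the identity near $\partial\B^4$. Gromov's theorem \cite{Gr} — a symplectic form on $\B^4$ standard near the boundary is globally standard — enters only to make precise that this argument is a genuine reformulation of \cite{C2, C3}: by Gromov one can conversely cap a symplectic $s$-cobordism of $\SS^3/G$ with a standard linear ball to obtain a copy of $(\B^4,\omega_0)$ carrying the corresponding $G$-action, so the two pictures encode exactly the same data.

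The main obstacle is the symplectic $s$-cobordism theorem of \cite{C2, C3} itself, proved by adapting Seiberg-Witten theory and pseudoholomorphic curves to the free $G$-action on $W$; granting it, the only points requiring genuine care are verifying that $\bar W$ is honestly ``standard near the boundary'' — which is precisely where equivariant Darboux and the standardness of $\omega_0$ are used — and arranging every diffeomorphism above to be equivariant and to equal the identity near the boundary, so that the final gluing is a global conjugacy. Both are routine once this framework is in place.
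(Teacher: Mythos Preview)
Your proposal is correct and follows exactly the line the paper indicates: the paper presents Theorem~2.1 not with a self-contained proof but as a reformulation of the symplectic $s$-cobordism theorem of \cite{C2,C3}, via the reduction (Smith theory, semifreeness, unique interior fixed point, $h$- then $s$-cobordism by \cite{KS2}) already spelled out in the paragraph preceding the statement. Your write-up simply fills in the one step the paper leaves implicit---using the equivariant Darboux theorem to make the symplectic structure standard (and the action linear) near the interior fixed point, so that the quotient cobordism is ``standard near the boundary'' in the sense required by \cite{C2,C3}---and your reading of Gromov's role in the reverse direction of the reformulation matches the paper's.
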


We conjectured in \cite{C3} that a smooth $s$-cobordism of elliptic $3$-manifolds whose universal 
covering is trivial is symplectic, and suggested to attack this problem using near symplectic geometry
\cite{T1}. One can reformulate the conjecture in \cite{C3} in terms of group actions on $\SS^4$.

\begin{conj}
A smooth finite group action on $\SS^4$ with an isolated fixed point is smoothly conjugate to a linear
action. 
\end{conj}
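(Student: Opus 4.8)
\emph{Proof proposal.} The plan is to follow the chain of reductions carried out above in this section and then the strategy suggested in \cite{C3}. As explained above, excising an invariant ball around the given isolated fixed point and then --- using the classification of free linear $\SS^3$-actions \cite{Wolf}, Smith theory \cite{Bre}, and \cite{KS2} --- an invariant ball around the resulting interior fixed point of the $\B^4$-action, reduces the conjecture to the following statement: \emph{if $W$ is a smooth $s$-cobordism of an elliptic $3$-manifold $\SS^3/G$ whose universal cover $\widetilde W$ is the product $\SS^3\times[0,1]$, then $W$ is smoothly $(\SS^3/G)\times[0,1]$.} Here the hypothesis on the universal cover is automatic, since $\widetilde W\cong\B^4\setminus B$ for an open ball $B$; and conversely a smooth trivialization of $W$ reassembles --- via covering space theory, up to an automorphism of $G$, and using that the action is already linear near the excised points --- into a smooth conjugacy of the original action to a linear one. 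By the theorem of \cite{C2, C3} stated above, it now suffices to put on $W$ a symplectic form which is standard, i.e. the symplectization of the standard contact structure on $\SS^3/G$, near $\partial W$.

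To construct such a form I would use near-symplectic geometry in the sense of Taubes \cite{T1} (cf. also Honda and Gay--Kirby). A symplectic form on $W$ standard near $\partial W$ is the same as a $G$-invariant one on $\widetilde W=\SS^3\times[0,1]$ that is standard near the two ends. Extending the standard collar form over $\widetilde W$ as a $G$-invariant \emph{closed} form is unobstructed, as $W$ is a rational homology product; a generic such extension need not be non-degenerate, but can be arranged to be \emph{near-symplectic} in the interior, meaning $\omega\wedge\omega\ge 0$ with equality precisely along a $G$-invariant link $Z\subset\mathrm{int}\,\widetilde W$ where $\omega$ vanishes transversally as a section of $\Lambda^+$. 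One then needs to remove $Z$. The leverage is that $\widetilde W$ is literally a piece of $(\C^2,\omega_{\mathrm{std}})$, so a zero-free model exists upstairs; the aim is to cancel the circles of $Z$ in $G$-orbits of oppositely signed pairs --- by the near-symplectic cancellation moves of Gay--Kirby and Auroux--Donaldson--Katzarkov --- performed $G$-equivariantly and rel the standard form near $\partial W$, until $Z=\emptyset$, so that the resulting $G$-invariant symplectic form descends to $W$.

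The main obstacle is precisely this last step: the equivariant elimination of the zero locus $Z$. One must control the signs and normal framings of the zero circles finely enough to see that they cancel in $G$-symmetric pairs, arrange the cancelling isotopies to commute with the free $G$-action and to fix the prescribed form near $\partial W$, and rule out a residual unremovable circle. Short of a complete resolution, the construction still yields a $G$-invariant \emph{near}-symplectic structure on $W$; it is conceivable that this weaker input already suffices to run, in an equivariant setting, the Seiberg--Witten and pseudoholomorphic-curve arguments underlying \cite{C2, C3} (cf. \cite{C1}) and force the smooth triviality of $W$ directly --- an alternative that would bypass the construction of an honest symplectic form.
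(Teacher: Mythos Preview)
This statement is a \emph{conjecture} in the paper, not a theorem; the paper offers no proof. What the paper does is exactly what you have reconstructed: it records the reduction (via \cite{Wolf}, \cite{Bre}, \cite{KS2}) to the triviality of a smooth $s$-cobordism of $\SS^3/G$ with trivial universal cover, notes that by \cite{C2,C3} it would suffice to make such a cobordism symplectic, and then points to near-symplectic geometry \cite{T1} as the suggested line of attack. Your write-up is a faithful and more detailed elaboration of that same strategy, and you are right to flag the equivariant cancellation of the zero circles as the unresolved core --- that is precisely the step neither the paper nor \cite{C3} claims to carry out.

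In short, there is no ``paper's own proof'' to compare against; your proposal matches the paper's conjectural approach and is candid about where it stops being a proof. One small caveat on your reduction paragraph: the hypothesis that the universal cover $\widetilde W$ is diffeomorphic to $\SS^3\times[0,1]$ is not quite ``automatic'' from $\widetilde W\cong \B^4\setminus B$ alone --- one is implicitly invoking that $\B^4\setminus B$ is a standard product, which in turn rests on the smooth $4$-dimensional annulus/Schoenflies statement for round balls (true here because the excised balls are metrically standard). This is fine in the present setting, but worth stating rather than sliding over.
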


Next, we give a corollary of Theorem 2.1, regarding group actions on $\C\P^2$.

\begin{thm}
Suppose a finite group $G$ acts smoothly on $\C\P^2$ which has an isolated fixed point and preserves 
a symplectic structure $\omega$. Then the $G$-action is smoothly conjugate to a linear action. 
\end{thm}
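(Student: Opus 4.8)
The plan is to reduce the statement to Theorem 2.1 by cutting $\C\P^2$ open along a $G$-invariant symplectic line. Near $x$ the action is smoothly linearizable (Bochner's linearization theorem), and since it preserves $\omega$ an equivariant Darboux argument lets one take the isotropy representation $\rho_x$ unitary and produces a $G$-invariant ball $B\ni x$ on which the action is linear and $\omega$ is standard; that $x$ is an isolated point of the singular set means $G$ acts freely on a punctured neighborhood of $x$, hence freely and linearly on $\partial B\cong\SS^3$. Next, fix a $G$-invariant $\omega$-compatible almost complex structure $J$. By Gromov's theory of $J$-holomorphic curves in $\C\P^2$, in its $G$-equivariant form, the class of a complex line is represented by a family of embedded $J$-spheres --- the ``$J$-lines'' --- on which $G$ acts, and one extracts from it a $G$-invariant $J$-line $\Sigma$ disjoint from $x$ (hence from $B$). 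Then $\B^4:=\C\P^2\setminus\nu(\Sigma)$ is a $G$-invariant smooth $4$-ball containing $B$ and $x$.

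The next step is to verify the hypotheses of Theorem 2.1 for this ball. The $G$-action on $\partial\B^4=\partial\nu(\Sigma)$, the unit normal circle bundle of $\Sigma$, should be free and linear: finite group actions on $\SS^2$ are smoothly linearizable, and at any point of $\Sigma$ fixed by a nontrivial $g$ the rotation number in the normal direction is nonzero --- this should be forced by positivity of intersections of $J$-holomorphic curves together with the constraints on the fixed-point data from the Lefschetz and $G$-signature formulas and the structure of $\rho_x$ at $x$ --- so a neighborhood of $\Sigma$ is equivariantly the standard neighborhood of a projective line. One also needs that $\omega|_{\B^4}$ is standard: the complement of a symplectic line in $(\C\P^2,\omega)$ is symplectomorphic rel boundary to the standard symplectic $\B^4$ by Gromov's theorem, and one wants a $G$-equivariant version, again obtained by working with $G$-invariant $J$'s.

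Granting this, Theorem 2.1 applied to $\B^4$ with its standard symplectic form and $G$-action yields a diffeomorphism $\phi$ of $\B^4$, equal to the identity near $\partial\B^4$, after which the action on $\B^4$ is linear. Extending $\phi$ by the identity over $\nu(\Sigma)$ --- legitimate precisely because $\phi$ is the identity near the boundary --- produces a diffeomorphism of $\C\P^2$ after which the $G$-action is linear on $\B^4$ and coincides with the standard linear model near $\Sigma$. Since a linear $G$-action on $\C\P^2$ decomposes along a $G$-invariant line in exactly the same fashion, and a linear action is determined by its restriction to the corresponding invariant $\SS^3$, these two pieces match up with the linear model and give a global conjugacy between the given action and a linear one.

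I expect the main obstacle to be everything in the first two paragraphs: constructing the $G$-invariant $J$-line $\Sigma$ disjoint from $x$, checking that a neighborhood of $\Sigma$ is equivariantly standard (ruling out exotic normal data and bad intersections of fixed sets with $\Sigma$), and upgrading Gromov's rigidity of symplectic balls in $\C\P^2$ to an equivariant statement. This is precisely where the hypothesis that the action is symplectic is essential; without it one is in the situation of the open $\SS^4$ analogue discussed above.
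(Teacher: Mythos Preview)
Your overall strategy --- find a $G$-invariant $J$-line $C$, standardize a neighborhood of $C$ equivariantly, identify the complement with a standard symplectic $\B^4$ via Gromov, then apply Theorem~2.1 --- is exactly the paper's. The genuine gap is the step you flag yourself: producing the $G$-invariant $J$-line. You write that ``one extracts from it a $G$-invariant $J$-line $\Sigma$'', but this is the entire content of the argument; the moduli space of $J$-lines is a copy of $\C\P^2$ on which $G$ acts, and you must exhibit a fixed point there. The paper does this not abstractly but by exploiting the structure of $G$. Since $G$ acts freely on the $\SS^3$ around the isolated fixed point, $G$ is either cyclic or non-Abelian with a central involution $\tau$ (the classification of finite subgroups of $SO(4)$ acting freely on $\SS^3$). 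In the cyclic case a generator has at least two fixed points in $\C\P^2$ by Lefschetz, and Gromov's \emph{uniqueness} of the $J$-line through two given points forces that line to be invariant. In the non-Abelian case the fixed set of $\tau$ must contain a $2$-sphere $C$ with $C^2=1$ (Propositions~1.1(3), 1.1(4), 1.2(2)); being a fixed-point component it is $J$-holomorphic, and since $\tau$ is central, $C$ is $G$-invariant.

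Two of the obstacles you anticipate are lighter than you suggest. The equivariant symplectic neighborhood theorem already makes a tube around $C$ equivariantly standard, disposing of the ``exotic normal data'' worry in one stroke. And no equivariant upgrade of Gromov's ball theorem is needed: the ordinary symplectomorphism from the complement to $(\B^4,\omega_0)$, taken to be standard near the boundary, simply transports the $G$-action to one preserving $\omega_0$ and linear on $\partial\B^4$, which is precisely the hypothesis of Theorem~2.1. Finally, the paper makes no demand that $C$ avoid the isolated fixed point; that extra requirement is unnecessary.
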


\begin{proof}
Fix a $G$-equivariant, $\omega$-compatible  almost complex structure $J$. The key point is to show 
that there is an embedded $J$-holomorphic $2$-sphere $C$ representing a generator of $H_2(\C\P^2)$,
which is invariant under the action of $G$. Assume the existence of $C$ momentarily. Then the action of
$G$ in an invariant neighborhood of $C$ is symplectomorphic to a linear action by the equivariant
symplectic neighborhood theorem. Now by a theorem of Gromov \cite{Gr}, the complement of this neighborhood is symplectomorphic to $\B^4$ with the standard structure. By Theorem 2.1, 
the $G$-action is smoothly linear, which implies that the $G$-action on $\C\P^2$ is smoothly linear.

It remains to show that such a $J$-holomorphic $2$-sphere exists, for which we need to 
exploit the structure of $G$. Note that $G$ acts freely and linearly on $\SS^3$ because the 
$G$-action on $\C\P^2$ has an isolated fixed point. Such a group falls into two distinct classes: 
(i) cyclic groups, (ii) non-Abelian groups with a center containing a $\Z_2$-subgroup, cf. \cite{Wolf}. 
Considering case (i), let $g\in G$ be a generator. Then by the Lefschetz fixed point theorem, 
there must be at least two distinct points, say $x,y\in\C\P^2$, that are fixed under $g$. By a theorem of 
Gromov \cite{Gr}, there is a unique embedded $J$-holomorphic $2$-sphere
$C$ passing through $x,y$ such that $C$ is a generator of $H_2(\C\P^2)$. The uniqueness of $C$ 
implies that $C$ must be invariant under $g$, and this finishes the case (i). For case (ii), let $\tau\in G$
be the involution contained in the center of $G$. Then Proposition 1.1(4) implies that the
fixed point set of $\tau$ contains a $2$-dimensional component $C$, which must be a $2$-sphere by
Proposition 1.1(3). Furthermore, $C$ is clearly $J$-holomorphic.  By Proposition 1.2(2), $C$ 
must be the only component in the fixed point set of $\tau$, and $C^2=1$. The latter implies that
$C$ is a generator of $H_2(\C\P^2)$. Finally, since $\tau$ is in the center of $G$, $C$ must be invariant under $G$. This finishes the case (ii). 
\end{proof}

In view of Theorem 2.1, one naturally asks

\begin{ques}
Let $\omega_0$ be the standard symplectic structure on $\R^4$. Suppose a finite group $G$ acts on 
$(\B^4,\omega_0)$ via symplectomorphisms, which are free and linear on the boundary of $\B^4$. 
Is the $G$-action conjugate to a linear action by a symplectomorphism equaling identity near the 
boundary?
\end{ques}

We have a partial result, which was obtained in 2004 (cf. \cite{C5}). 

\begin{thm}
(Chen \cite{C5})
Let $G$ be a cyclic or metacyclic finite group acting on $(\B^4,\omega_0)$
via symplectomorphisms which are linear near the boundary of $\B^4$. (We do not 
assume the action is free near the boundary.)
Then the action of $G$ is conjugate to a linear action by a symplectomorphism
of $(\B^4,\omega_0)$ which is identity near the boundary.
\end{thm}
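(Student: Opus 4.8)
The plan is to construct the conjugating symplectomorphism from a $G$-equivariant pencil of pseudoholomorphic spheres, after first capping off $(\B^4,\omega_0)$. \emph{Capping off.} Near $\partial\B^4$ the action is given by a homomorphism $G\to Sp(4,\R)$ whose image preserves $\SS^3$ and hence lies in $U(2)$; this unitary action extends over the standard symplectic disk bundle $\mathcal O(1)\to\C\P^1$. Gluing that bundle to $(\B^4,\omega_0)$ along the boundary yields a closed symplectic $4$-manifold $(Z,\omega)$ carrying a $G$-action which, on the added region, is \emph{the} linear action, and which contains the zero section $E$ as a $G$-invariant symplectic sphere with $E\cdot E=1$. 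By McDuff's classification of symplectic $4$-manifolds containing a symplectic sphere of square $+1$, such a $(Z,\omega)$ is symplectomorphic to $\C\P^2$ with a scaled Fubini--Study form. The $G$-action on $H^2(Z)$ is trivial, so (applying Proposition 1.1(2) to subgroups, together with solvability of $G$) $Z^G\neq\emptyset$; fix $p_0\in Z^G$, chosen outside $\overline{\B^4}$ if $Z^G$ contains a surface.

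\emph{The equivariant pencil.} Fix a $G$-invariant $\omega$-compatible almost complex structure $J$ that equals the Fubini--Study structure near $E$. As $[E]$ is primitive with $c_1\cdot[E]=3$ and positive square, a $J$-holomorphic sphere in class $[E]$ is embedded and admits no bubbling, so by Gromov's theory \cite{Gr} and automatic transversality the $J$-holomorphic spheres in class $[E]$ through $p_0$ form a pencil foliating $Z\setminus\{p_0\}$, with parameter space identified through tangent directions with $\P(T_{p_0}Z,J)\cong\C\P^1$; equivalently, blowing up the $G$-fixed point $p_0$ gives a $G$-equivariant ruled surface $\widetilde Z\to\C\P^1$ with the two disjoint $G$-invariant sections $E$ and the exceptional curve. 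All of this is canonical, so $G$ acts compatibly on the base $\C\P^1$ through $dg|_{p_0}\in U(2)$, i.e.\ through a finite subgroup of $PU(2)=SO(3)$; and near $E$ each pencil member is an honest Fubini--Study line, by uniqueness of $J_{FS}$-holomorphic germs. If the image of $G$ in $SO(3)$ is cyclic -- in particular whenever $G$ itself is cyclic -- this base action has two fixed points, hence two $G$-invariant pencil members; using these, together with a parametrized Riemann mapping along the fibers and an $SO(3)$-equivalence on the base, one builds a $G$-equivariant diffeomorphism $\Phi\colon Z\to\C\P^2$ carrying the action to the linear one and equal to the identity near $\partial\B^4$, where $J$, the action and the pencil are already standard. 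Then $\Phi^\ast\omega_{\mathrm{lin}}$ and $\omega_0$ are $G$-invariant symplectic forms on $\B^4$ that agree near the boundary, so a relative equivariant Moser argument gives $F=\Phi\circ\psi$ with $F^\ast\omega_{\mathrm{lin}}=\omega_0$; this $F$ is a symplectomorphism of $(\B^4,\omega_0)$, equal to the identity near $\partial\B^4$, that conjugates the action to a linear one.

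\emph{The metacyclic case.} For a general metacyclic $G$ the induced action on the base $\C\P^1$ may have dihedral image (for instance when $G=Q_8$), so no pencil member is $G$-invariant and the construction above cannot be run directly. Instead one first applies the cyclic case to the normal cyclic subgroup $N\subset G$ -- a subgroup of the given action, hence linear near the boundary -- to arrange that $N$ acts linearly on all of $\B^4$, and then analyzes the residual action with $N$ now linear: on the ruled surface $\widetilde Z\to\C\P^1$ this is a finite-group action on a rank-$2$ bundle over $\C\P^1$ covering the base action, with the two $N$-invariant fibers interchanged by $G/N\cong\Z_n$, and one brings it to standard form. Performing this two-step reduction while keeping control of both the symplectic structure and the ``identity near the boundary'' requirement is the main technical obstacle; it is here, and in the clean pencil argument of the cyclic case, that the hypothesis on $G$ is used, and the case analysis involved is why the proof, though elementary in outline, is long.
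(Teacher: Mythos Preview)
Your compactification to $\C\P^2$ and use of a single pencil through a fixed point is a genuinely different route from the paper's. The paper instead exploits the cyclic/metacyclic hypothesis at the very outset to find a decomposition $\C^2=\C\oplus\C$ that $G$ either preserves or swaps, compactifies to $\SS^2\times\SS^2$ with its split form, and uses the \emph{two} transverse foliations by $J$-spheres (the $A$-curves and $B$-curves) to write down a $G$-equivariant diffeomorphism $\psi$ directly as $\psi(z,w)=u_w\cap v_z$. Because the two families are interchanged exactly when $G$ swaps the factors, both the cyclic and the metacyclic cases are handled by the same argument; the only case analysis occurs in the one-paragraph lemma producing the decomposition. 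The paper then has to do some work to upgrade $\psi$ (which is only the identity \emph{along} the divisor at infinity) to a map that is the identity \emph{near} it, and finishes with an equivariant Moser argument plus an equivariant version of the McDuff--Salamon extension lemma (Lemma 9.4.10 in \cite{McS}).

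Your sketch has a real gap in the metacyclic case. You write that after linearizing the normal cyclic subgroup $N$ one ``analyzes the residual action with $N$ now linear \dots\ and one brings it to standard form,'' but this is not a proof, and the outline is internally inconsistent: you describe ``the two $N$-invariant fibers interchanged by $G/N\cong\Z_n$,'' yet an interchange is an order-$2$ phenomenon and $G/N$ need not have order $2$. More seriously, after your first step the full group $G$ still acts (not $G/N$), and the obstacle that forced the two-step reduction---that the base action on $\C\P^1$ may be dihedral---has not gone away. The paper avoids this entirely by passing to $\SS^2\times\SS^2$, where the possible swap of the two $\C$-factors is built into the construction. A smaller but nontrivial point: in your cyclic case you assert that $\Phi$ is the identity near $\partial\B^4$ because the pencil members are standard near $E$. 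But $\partial\B^4$ is not ``near $E$''---it is the unit circle bundle in the cap---and the pencil there depends on the global $J$; the paper has to carry out an explicit cutoff modification ($\psi\rightsquigarrow\psi'$) precisely to get the map to be the identity in a \emph{neighborhood} of infinity rather than just along it, and you would need an analogous step.
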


\begin{proof}
The proof is an equivariant version of the proof in McDuff-Salamon \cite{McS} about 
the uniqueness of symplectic structures on the $4$-ball. 

First of all, we may identify $\R^4$ with $\C^2$ such that the
action of $G$ near $\partial\B^4$ is complex linear.

Consider the linear action of $G$ on $\C^2$. We claim that there
exists a decomposition $\C^2=\C\oplus\C$ such that the action of
$G$ on $\C^2$ can be extended to $\SS^2\times\SS^2$, which is obtained
via projectivizations of each factor in $\C\oplus\C$. The case when
$G$ is cyclic is clear, we simply take $\C^2=\C\oplus\C$ to be a
decomposition into the eigenspaces of $G$. Now suppose $G$ is metacyclic,
with a cyclic normal subgroup $H$ such that $G/H$ is cyclic. If
each element of $H$ has two distinct eigenvalues, then we take
$\C^2=\C\oplus\C$ to be the decomposition into eigenspaces of $H$.
Since $H$ is normal, any $g\in G$ either preserves
$\C\oplus\C$ or switches the two factors. It is clear that the
action of $G$ extends to an action on $\SS^2\times\SS^2$. Finally,
suppose the elements of $H$ have repeated eigenvalues. In this case $H$
lies in the center of $G$. We pick a $g\in G$ such that
$[g]\in G/H$ generates $G/H$. Then a decomposition $\C^2=\C\oplus\C$
into eigenspaces of $g$ will do.

With the preceding understood, we extend the $G$-action on $\B^4$ to
the rest of $\C^2$ by linearity, and fix a decomposition $\C^2=\C\oplus\C$
as described above. Then we compactify $\C\oplus\C$ via
projectivizations of each factor into $M=\SS^2\times\SS^2$. Note that
$M$ has the standard split symplectic form $\omega_0$ which has equal
areas on $\SS^2\times\{\infty\}$ and $\{\infty\}\times\SS^2$. Moreover,
the action of $G$ extends naturally to an action on $(M,\omega_0)$
which is linear near $\SS^2\times\{\infty\}\cup \{\infty\}\times\SS^2$.

Fix an $\omega_0$-compatible almost complex structure $J$ on $M$
such that $J$ is $G$-equivariant and is the product complex structure
near $\SS^2\times\{\infty\}\cup \{\infty\}\times\SS^2$. Then there are
two families of $J$-holomorphic embedded $2$-spheres representing the
classes of $\SS^2\times\{\infty\}$ and $\{\infty\}\times\SS^2$ respectively.
Call an element in the former a $A$-curve and an element in the latter a
$B$-curve. Note that under the action of $G$, a $A$-curve is either
sent to a $A$-curve or a $B$-curve.

Now consider $\SS^2\times\SS^2$, which is equipped with the corresponding
linear $G$-action, and is given with a compatible identification of
$\SS^2\times\{\infty\}$ and $\{\infty\}\times\SS^2$ to those in $M$.
Then there is a diffeomorphism $\psi:\SS^2\times\SS^2\rightarrow M$ defined
by $\psi(z,w)=\mbox{image }u_w\cap\mbox{image }v_z$, where $u_w$ is the
$A$-curve passing through $\{\infty\}\times\{w\}\in M$ and $v_z$ is the
$B$-curve passing through $\{z\}\times\{\infty\}\in M$. By way of construction,
$\psi$ is identity along the union of $\SS^2\times\{\infty\}$ and
$\{\infty\}\times\SS^2$. Moreover, note that for any $g\in G$,
either $g(u_w)=u_{gw}$ and $g(v_z)=v_{gz}$, or $g(u_w)=v_{gw}$ and
$g(v_z)=u_{gz}$, depending on whether $g$ preserves $\SS^2\times\{\infty\}$
and $\{\infty\}\times\SS^2$ or switches them. In any event, it
follows easily that $\psi$ is equivariant with respect to the linear
$G$-action on $\SS^2\times\SS^2$ and the $G$-action on $M$.

The diffeomorphism $\psi$ is further modified into a diffeomorphism
$\psi^\prime:\SS^2\times\SS^2\rightarrow M$ which is identity near
$\SS^2\times\{\infty\}\cup \{\infty\}\times\SS^2$ and equals
$\psi$ outside of a neighborhood of the union of $\SS^2\times\{\infty\}$
and $\{\infty\}\times\SS^2$. To define $\psi^\prime$, note that
in a neighborhood of $\SS^2\times\{\infty\}$, $\psi$ is given by
$(z,w)\mapsto (\phi(z,w),w)$ for a family of holomorphic functions
$w\mapsto \phi(z,w)$, $z\in\SS^2$, with $\phi(z,\infty)=z$, 
and similarly in a neighborhood
of $\{\infty\}\times\SS^2$, $\psi$ is given by $(z,w)\mapsto
(z,\phi^\prime(z,w))$ for a family of holomorphic functions
$z\mapsto\phi^\prime(z,w)$, $w\in\SS^2$, with $\phi^\prime(\infty,w)=w$. 
We fix a cutoff function
$\beta$ which equals $\infty$ near $\infty$ and equals $1$ outside a
neighborhood of $\infty$, and define $\rho(x)=\beta(|x|)x$ for $x$ in
a neighborhood of $\infty\in\SS^2$. With these understood, $\psi^\prime$
is defined by $(z,w)\mapsto (\phi(z,\rho(w)),w)$ in a neighborhood
of $\SS^2\times\{\infty\}$ and by $(z,w)\mapsto (z,\phi^\prime(\rho(z),w))$
in a neighborhood of $\{\infty\}\times\SS^2$. We claim that $\psi^\prime$
is also $G$-equivariant. To see this, note that the linear $G$-action on
$\SS^2\times\SS^2$ is given by either $g\cdot (z,w)=(az,bw)$ or
$g\cdot (z,w)=(aw,bz)$ for some $a,b\in\C$ such that $|a|=|b|=1$.
The fact that $\psi$ is equivariant and that the action of $G$ on
$M$ is linear near $\SS^2\times\{\infty\}\cup \{\infty\}\times\SS^2\subset M$
implies that $\phi(az,bw)=a\phi(z,w)$ in the former case and $\phi^\prime
(aw,bz)=b\phi(z,w)$ in the latter case. It is easy to check that
$\phi(az,\rho(bw))=a\phi(z,\rho(w))$ in the former case and
$\phi^\prime(\rho(aw),bz)=b\phi(z,\rho(w))$ in the latter case,
which implies that $\psi^\prime$ is $G$-equivariant.

Now apply Moser's argument to the following family of symplectic forms
on $\SS^2\times\SS^2$
$$
\omega_t=(1-t)\omega_0+t(\psi^\prime)^\ast\omega_0, \; 0\leq t\leq 1.
$$
Note that each $\omega_t$ is $G$-equivariant with respect to the linear
$G$-action on $\SS^2\times\SS^2$, and $\omega_t=\omega_0$ for all
$t\in [0,1]$ near $\SS^2\times\{\infty\}\cup \{\infty\}\times\SS^2$.
It follows easily that there exists a $1$-form $\alpha$ which obeys:
(1) $\frac{d}{dt}\omega_t=d\alpha$, (2) $\alpha$ vanishes near
$\SS^2\times\{\infty\}\cup \{\infty\}\times\SS^2$, and (3) $\alpha$
is $G$-equivariant. Let $X_t$ be the time-dependent vectorfield
on $\SS^2\times\SS^2$ which is defined by the equation
$\alpha+i(X_t)\omega_t=0$, then $X_t$ is clearly also $G$-equivariant,
and vanishes near $\SS^2\times\{\infty\}\cup \{\infty\}\times\SS^2$.
Let $\psi_1$ be the time-one map generated by $X_t$, then $\psi_1$ is identity
near $\SS^2\times\{\infty\}\cup \{\infty\}\times\SS^2$ and is $G$-equivariant
with respect to the linear $G$-action on $\SS^2\times\SS^2$. Moreover, 
$\psi_1^\ast ((\psi^\prime)^\ast\omega_0)=\omega_0$. 

Define $\Psi:\C^2\rightarrow\C^2$ to be the restriction of
$\psi^\prime\circ\psi_1$ to $\C\oplus\C\subset\SS^2\times\SS^2$
which is identity near infinity.
Then $\Psi^\ast\omega_0=\omega_0$ and $\Psi$ is $G$-equivariant
where $G$ acts on the domain $\C^2$ linearly and acts on the range
$\C^2$ by the natural extension of the action of $G$ on $\B^4$. Now apply 
the following equivariant version of Lemma 9.4.10 in McDuff-Salamon \cite{McS} to 
$f\equiv \Psi^{-1}|_{\C^2\setminus V}$, where $V\subset \B^4$ is a compact, 
$G$-invariant convex sub-domain such that $0\in int(V)$ and $G$ acts linearly 
on $\B^4\setminus V$:

\vspace{2mm}

{\it Let $G$ be a compact Lie group acting on $(\R^{2n},\omega_0)$ via
linear symplectomorphisms. Suppose $V\subset\R^{2n}$ is a $G$-invariant,
star-shaped compact set with $0\in int(V)$ and $f:\R^{2n}\setminus
V\rightarrow \R^{2n}$ is a $G$-equivariant symplectic embedding equaling
identity near infinity. Then, for every $G$-invariant open
neighborhood $W\subset\R^{2n}$ of $V$, there exists a $G$-equivariant
symplectomorphism $g:\R^{2n}\rightarrow\R^{2n}$ such that
$g|_{\R^{2n}\setminus W}=f$.
}
\vspace{2mm}

Let $g:\C^2\rightarrow \C^2$ be the $G$-equivariant 
symplectomorphism obtained from the lemma. 
Then $\Phi\equiv (\Psi\circ g)|_{\B^4}:\B^4\rightarrow \B^4$ is identity
near the boundary and satisfies $\Phi^\ast\omega_0=\omega_0$, and
is $G$-equivariant where the action of $G$ on the domain of $\Phi$
is linear while on the range of $\Phi$ it is the given action on $\B^4$.
\end{proof}

There are two immediate corollaries. As argued in Theorem 2.3, we obtain a
classification for certain finite subgroups of the symplectomorphism group of $\C\P^2$. 

\begin{thm}
(Chen \cite{C5})
Let $G\subset Symp(\C\P^2,\omega_0)$ be a finite subgroup which is
either cyclic or metacyclic with a nonempty fixed point set. Then $G$ is
conjugate in $Symp(\C\P^2,\omega_0)$ to a subgroup of $PU(3)$.
Here $\omega_0$ is the standard K\"{a}hler structure.
\end{thm}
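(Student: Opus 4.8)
The plan is to follow the deduction of Theorem 2.3 from Theorem 2.1, with Theorem 2.1 replaced by Theorem 2.5; the one genuinely new ingredient is producing a $G$-invariant generator line without assuming the action is pseudofree. Fix a $G$-equivariant, $\omega_0$-compatible almost complex structure $J$ on $(\C\P^2,\omega_0)$. Since every symplectomorphism of $\C\P^2$ acts trivially on $H^*(\C\P^2;\Z)$, for any $1\neq g\in G$ the Lefschetz number equals $\chi(\C\P^2)=3$, so $\chi(\mathrm{Fix}(g))=3$. Each component of $\mathrm{Fix}(g)$ is a $J$-holomorphic, hence symplectic and orientable, submanifold; the adjunction formula forces a $2$-dimensional component to be a sphere of homology degree $1$ or $2$, and Proposition 1.1(3), applied to a prime-order subgroup of $\langle g\rangle$ (note $H^2(\C\P^2)$ has no cyclotomic summand, so $s=0$), rules out components of positive genus. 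Thus $\mathrm{Fix}(g)$ is either three isolated points, or one isolated point together with one embedded $J$-holomorphic sphere of degree $1$ or $2$.

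I would first exclude the degree-$2$ (conic) case. If a prime-order element $g$ had a fixed conic $Q$ with $Q\cdot Q=4$, then $\mathrm{Fix}(g)=Q\sqcup\{m\}$ with $m$ isolated, and applying the $G$-signature theorem (Proposition 1.2(1)) to the powers $g^k$, $1\le k\le p-1$ --- all of which have the same fixed set and satisfy $\mathrm{Sign}(g^k,\C\P^2)=1$ --- and summing over $k$ reduces, after using $\sum_k\csc^2(k\pi/p)=(p^2-1)/3$, to a constraint on a Dedekind sum that the classical estimates forbid; for a composite cyclic group one passes to a prime-order subgroup. (For $p=2$ this is exactly the half-integer-signature argument used in the proof of Theorem 2.3.) Hence every $2$-dimensional fixed component of every element of $G$ is a line, that is, a generator of $H_2(\C\P^2)$.

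Next I would construct a $G$-invariant line $C$. Choose $g\in G$ generating a nontrivial cyclic subgroup that is normal in $G$ --- a generator if $G$ is cyclic, a generator of the cyclic normal subgroup if $G$ is metacyclic --- so that $\mathrm{Fix}(g)=\mathrm{Fix}(\langle g\rangle)$ is a $G$-invariant subset of $\C\P^2$. If $\mathrm{Fix}(g)$ contains a line, it contains a unique one (by the count $\chi=3$), which is therefore $G$-invariant. Otherwise $\mathrm{Fix}(g)$ consists of three points; pick $x_0\in\mathrm{Fix}(G)\subset\mathrm{Fix}(g)$. Then $G$ permutes these three points and fixes $x_0$, hence preserves the pair of remaining two points, and so preserves the unique embedded $J$-holomorphic line through them (Gromov's theorem, as in the proof of Theorem 2.3); when $G$ is cyclic it fixes all three points and the conclusion is the same. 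Either way we obtain a $G$-invariant embedded $J$-holomorphic $2$-sphere $C$ representing the generator of $H_2(\C\P^2)$.

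Finally I would run the reduction used for Theorem 2.3: by the equivariant symplectic neighbourhood theorem the $G$-action on a $G$-invariant tubular neighbourhood $U$ of $C$ is conjugate to a linear action on a neighbourhood of a line in $(\C\P^2,\omega_0)$, so it is linear near $\partial U$; by Gromov's theorem $(\C\P^2\setminus U,\omega_0)$ is symplectomorphic to the standard $(\B^4,\omega_0)$, carrying the $G$-action to a symplectic action that is linear (though not necessarily free) near the boundary. As $G$ is cyclic or metacyclic, Theorem 2.5 conjugates this action, by a symplectomorphism equal to the identity near $\partial\B^4$, to a linear one; gluing that conjugation along $\partial U$ to the neighbourhood identification gives a symplectomorphism of $(\C\P^2,\omega_0)$ carrying the original $G$-action to a linear one, i.e.\ conjugating $G$ into $PU(3)$. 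I expect the main obstacle to lie in the invariant-line construction in the non-pseudofree setting --- concretely, the exclusion of fixed conics through the $G$-signature theorem and the equivariant bookkeeping needed for the metacyclic case; a further but more routine point is verifying that the equivariant normal bundle of $C$ agrees with the linear model, so that the neighbourhood $U$ genuinely carries the restriction of a $PU(3)$-action.
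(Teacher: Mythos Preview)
Your overall architecture is exactly what the paper intends: find a $G$-invariant $J$-holomorphic line $C$, use the equivariant symplectic neighbourhood theorem to linearize near $C$, invoke Gromov to identify the complement with $(\B^4,\omega_0)$, and then apply Theorem~2.5 in place of Theorem~2.1. The paper itself gives no further proof than the phrase ``as argued in Theorem~2.3'', so your write-up is in fact considerably more detailed than the paper's; the only substantive comparison is with the construction of the invariant line in the proof of Theorem~2.3.

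For cyclic $G$ the paper's argument is shorter than yours and does not require any analysis of two-dimensional fixed components, let alone exclusion of conics: a generator $g$ has $\chi(\mathrm{Fix}(g))=3$, hence at least two distinct fixed points $x,y$, and the unique $J$-line through $x,y$ is automatically $\langle g\rangle$-invariant by uniqueness, regardless of what the rest of $\mathrm{Fix}(g)$ looks like. Your conic discussion is unnecessary here.

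For metacyclic $G$ the paper's case~(ii) in Theorem~2.3 (using a central involution) does not apply, and your argument via the normal cyclic subgroup $H=\langle g\rangle$ is the natural replacement. Your exclusion of fixed conics is correct --- summing the $G$-signature formula over $k=1,\dots,p-1$ and using $\sum_k\csc^2(k\pi/p)=(p^2-1)/3$ forces $\mathrm{def}_m=-\tfrac{(p-1)(4p+1)}{3}$, while Cauchy--Schwarz on $\sum_k\cot(ka\pi/p)\cot(kb\pi/p)$ gives $|\mathrm{def}_m|\le\tfrac{(p-1)(p-2)}{3}$, a contradiction. However, this step can also be bypassed: if $\mathrm{Fix}(g)=Q\sqcup\{m\}$ with $Q$ a conic, then $m$ is $G$-fixed (unique isolated point) and $G/H$, being cyclic, acts on the sphere $Q$ with a fixed point $y$; the line through $m$ and $y$ is then $G$-invariant. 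Either route works, and once the invariant line is in hand your reduction to Theorem~2.5 is exactly as the paper indicates. The worry you flag about the equivariant normal bundle of $C$ is the same implicit step already present in the paper's proof of Theorem~2.3.
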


With Example 3.5 in \cite{C1}, we obtain a classification of symplectic structures on weighted
$\C\P^2$'s, generalizing the corresponding theorem for $\C\P^2$ due to Gromov and
Taubes \cite{Gr,T}.

\begin{thm}
(Chen \cite{C5})
Two symplectic forms $\omega_1,\omega_2$ on a weighted projective plane
$\P(d_1,d_2,d_3)$ are cohomologous if and only if there exists a homologically trivial 
self-diffeomorphism $\psi$ of $\P(d_1,d_2,d_3)$ such that $\psi^\ast\omega_2=\omega_1$.
\end{thm}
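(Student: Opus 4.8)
The plan is to reduce the statement about symplectic forms on the weighted projective plane $\P(d_1,d_2,d_3)$ to Theorem 2.6 via the observation (Example 3.5 in \cite{C1}) that $\P(d_1,d_2,d_3)$ arises as a quotient of $\C\P^2$ by a finite cyclic (or at worst metacyclic) group $G$ acting linearly, i.e., as a subgroup of $PU(3)$. First I would note that the ``if'' direction is immediate: if $\psi$ is a homologically trivial self-diffeomorphism with $\psi^\ast\omega_2=\omega_1$, then $[\omega_1]=\psi^\ast[\omega_2]=[\omega_2]$ since $\psi^\ast$ is the identity on $H^2$. So the entire content is the ``only if'' direction.

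For the ``only if'' direction, suppose $[\omega_1]=[\omega_2]$ in $H^2(\P(d_1,d_2,d_3);\R)$. Since $H^2$ of a weighted projective plane is one-dimensional, this is the assertion that $\omega_1$ and $\omega_2$ have equal total ``area,'' up to scaling. The strategy is to pull $\omega_1$ and $\omega_2$ back to the smooth locus of $\C\P^2$ under the branched covering $\pi:\C\P^2\to \P(d_1,d_2,d_3)=\C\P^2/G$, obtaining $G$-invariant symplectic forms $\tilde\omega_1=\pi^\ast\omega_1$ and $\tilde\omega_2=\pi^\ast\omega_2$ on $\C\P^2$ (one must check that the pullbacks extend smoothly across the branch locus and remain symplectic; this is where the precise orbifold structure of $\P(d_1,d_2,d_3)$ and the fact that $G\subset PU(3)$ enters). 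These are cohomologous on $\C\P^2$ because $\pi^\ast$ is injective on $H^2(\cdot;\R)$. Now invoke the $G$-equivariant uniqueness of symplectic structures on $\C\P^2$: by the same argument as in Theorem 2.3, using a $G$-equivariant $\omega_i$-compatible almost complex structure and Gromov's theory of $J$-holomorphic spheres in the class of a line, together with Theorem 2.1 to trivialize on the complement of a neighborhood of such a sphere, one produces a $G$-equivariant diffeomorphism $\tilde\psi$ of $\C\P^2$ with $\tilde\psi^\ast\tilde\omega_2=\tilde\omega_1$ and with $\tilde\psi$ homologically trivial. Descending $\tilde\psi$ to the quotient gives the desired homologically trivial self-diffeomorphism $\psi$ of $\P(d_1,d_2,d_3)$ with $\psi^\ast\omega_2=\omega_1$.

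The main obstacle I expect is \emph{equivariance combined with control near the branch/fixed locus}. One issue is to arrange that the $G$-equivariant diffeomorphism $\tilde\psi$ of $\C\P^2$ restricts correctly on $X^G$, i.e., that it is the identity (or at least suitably standard) in a neighborhood of the fixed point set of $G$ so that it genuinely descends to a diffeomorphism of the quotient rather than merely an orbifold self-map; this is precisely the kind of ``identity near the boundary'' control that Theorem 2.1 and Theorem 2.4 are designed to provide, so the work is in setting up the normal forms of $\tilde\omega_1,\tilde\omega_2$ near $X^G$ (via an equivariant symplectic neighborhood / Moser argument adapted to the fixed surfaces and isolated fixed points) so that the hypothesis of Theorem 2.1 applies to the complement. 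A secondary technical point is verifying that a symplectic form on the orbifold $\P(d_1,d_2,d_3)$ pulls back to an honest smooth symplectic form on $\C\P^2$ and, conversely, that the constructed equivariant diffeomorphism descends smoothly; this requires knowing that $\C\P^2 \to \P(d_1,d_2,d_3)$ is modeled locally on the relevant cyclic quotient singularities and that $G$ acts by the explicit weighted-action formula, which is exactly the content one imports from Example 3.5 in \cite{C1}.

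Modulo these points, the proof is essentially an equivariant packaging of the Gromov--Taubes uniqueness of symplectic forms on $\C\P^2$, with Theorem 2.1 (equivariant standardness of the symplectic ball) supplying the crucial rigidity on the complement of the invariant line, and Theorem 2.6 already recording the $\C\P^2$-level conclusion that $G$ may be conjugated into $PU(3)$, which is what makes the passage to the quotient well-defined.
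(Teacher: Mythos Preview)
Your overall strategy---lift to $\C\P^2$ via the finite quotient description of $\P(d_1,d_2,d_3)$ from Example~3.5 of \cite{C1}, apply the equivariant uniqueness of symplectic structures on $\C\P^2$, and descend---is exactly the route the paper indicates (the paper gives no detailed argument, only the citation of Theorem~2.6 together with Example~3.5 in \cite{C1}). Two points deserve correction, however.

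First, you repeatedly invoke Theorem~2.1 and the argument of Theorem~2.3 to produce the $G$-equivariant $\tilde\psi$ with $\tilde\psi^\ast\tilde\omega_2=\tilde\omega_1$. That is not enough: Theorem~2.1 only yields a \emph{smooth} conjugacy of the $G$-action on $\B^4$, and correspondingly Theorem~2.3 only concludes smooth linearity on $\C\P^2$. To obtain a $G$-equivariant map that actually pulls one symplectic form back to the other, you need the \emph{symplectic} linearization of Theorem~2.4, whose $\C\P^2$-level consequence is Theorem~2.6. You do name Theorem~2.6 in your final paragraph, so the right ingredient is present, but the body of your argument should be rewritten to run through Theorems~2.4/2.6 rather than~2.1/2.3.

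Second, what you flag as the ``main obstacle'' is not one. A $G$-equivariant diffeomorphism $\tilde\psi$ of $\C\P^2$ descends to an (orbifold) self-diffeomorphism of $\C\P^2/G=\P(d_1,d_2,d_3)$ automatically; equivariance is exactly the descent condition, and no additional ``identity near the fixed set'' requirement is needed. The ``identity near the boundary'' clauses in Theorems~2.1 and~2.4 are there for gluing purposes in the $\B^4$ argument, not for passing to quotients. The genuine residual step---which you leave implicit---is to check that the two linear models for the $G$-action arising from $\tilde\omega_1$ and $\tilde\omega_2$ via Theorem~2.6 are conjugate in $PU(3)$ by an element intertwining the two identifications of $G$; this follows because the local representations at the fixed points are fixed by the weights $(d_1,d_2,d_3)$, which is part of what Example~3.5 in \cite{C1} supplies.
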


Here $\P(d_1,d_2,d_3)$ is the orbifold $\SS^5/\SS^1$, where $\SS^1$ acts linearly with
weights $d_1,d_2,d_3$, i.e., $\lambda \cdot (z_1,z_2, z_3)= 
(\lambda^{d_1}z_1,\lambda^{d_2}z_2,\lambda^{d_3}z_3)$. (We assume $d_1,d_2,d_3$ are
relatively prime. )

\vspace{2mm}
Note that in all of the results discussed above concerning group actions on $\C\P^2$, 
there is always the assumption that the action has a fixed point. The only linear action 
on $\C\P^2$ which is pseudofree without
a fixed point is by the metacyclic group $\Gamma_{n,r}$, where 
$$
\Gamma_{n,r}=\{x,u\mid x^n=u^3=1,u^{-1}xu=x^r, r^2+r+1\equiv 0\pmod{n}\}.
$$
A linear model for the actions of $\Gamma_{n,r}$ on $\C\P^2$ is given below
where $\lambda^n=1$:
$$
x\cdot [z_0:z_1:z_2]=[z_0:\lambda^{-r}z_1:\lambda z_2] \mbox{ and }
u\cdot [z_0:z_1:z_2]=[z_2:z_0:z_1]. 
$$
It was shown in Wilczynski \cite{Wil1} that any pseudofree, locally linear topological action of
$\Gamma_{n,r}$ on $\C\P^2$ is conjugate to a linear action. 

\begin{ques}
(Chen \cite{C5})
Is every pseudofree symplectic $\Gamma_{n,r}$-action on $\C\P^2$ smoothly conjugate to a 
linear action?
\end{ques}

\section{Symmetries and exotic smooth structures}
A classical theorem in Riemannian geometry says that if a compact Lie group $G$ acts smoothly
and effectively on a compact closed $n$-dimensional manifold $M^n$, then the dimension of $G$ 
can not exceed $n(n+1)/2$, where if indeed $\dim G=n(n+1)/2$, $M^n$ must be diffeomorphic to 
$\SS^n$ or $\R\P^n$.  In dimensions greater than $6$, there exist exotic spheres, i.e., smooth manifolds
which are homeomorphic but not diffeomorphic to the standard sphere $\SS^n$. Thus the said theorem
in Riemannian geometry gives us a criterion for distinguishing the standard sphere from an exotic one
in terms of groups of symmetries --- the standard sphere has the largest group of symmetries. On
the other hand, there are exotic spheres whose symmetry groups are found to be very restricted. For
example, Lawson and Yau \cite{LY} showed that there are exotic spheres which do not even support 
actions of small groups such as $\SS^3$ or $SO(3)$. 

In Chen-Kwasik \cite{CK1,CK2}, we investigated these phenomena in dimension $4$, 
using homotopy $K3$ surfaces
as the testing ground. On the one hand, the finite symplectic automorphism groups of a $K3$ 
surface (called a  {\it symplectic $K3$ group}) have been classified, cf. e.g. Mukai \cite{Mu}, 
which provided us with a large list of finite groups that can act at least continuously on a
homotopy $K3$ surface. On the other hand, there are well-established methods, cf. e.g. \cite{FS1, T},
to construct and analyze exotic smooth structures on a homotopy $K3$ surface. 

The basic idea of our approach in \cite{CK1,CK2} can be summarized as follows. Let $X$ be a
homotopy $K3$ surface which admits a symplectic structure. Then deep work of Taubes \cite{T}
implies that the set of Seiberg-Witten basic classes of $X$ spans an isotropic sublattice 
$L_X$ of $H^2(X)$ (with respect to the cup product), so that its rank, denoted by $r_X$, must 
range from $0$ to $3$, where $3=\min (b_2^{+},b_2^{-})$. 
The rank $r_X$ of the lattice $L_X$ gives a rough measurement of the 
exoticness of the smooth structure of $X$, with $r_X=0$ being the least exotic and with $r_X=3$ 
being the most exotic. Furthermore, $r_X=0$ implies that $X$ has a trivial canonical bundle, and 
the standard $K3$ is the only known example to have $r_X=0$. (We will call $X$ ``standard"
if $r_X=0$.)

The link between the exoticness $r_X$ and symmetry groups of $X$ is made as follows. Suppose
a finite group $G$ acts smoothly on $X$. Then the lattice $L_X$ is invariant under the $G$-action,
and since $L_X$ is isotropic, $L_X\subset L_X^\perp$, so that there is an induced $G$-action on 
$L_X^\perp/L_X$. In other words, the $G$-representation $\rho$ on $H^2(X)$ breaks up into two
smaller pieces, one on $L_X$ and the other on $L_X^\perp/L_X$. If one further assumes that $G$
preserves a symplectic structure on $X$, then the pseudoholomorphic curve techniques developed in
\cite{CK} allow one to analyze the fixed point set structure $(X^G, \{\rho_x\})$,
and then through the Lefschetz fixed point theorem and the $G$-signature theorem, one obtains
information about the $G$-representation $\rho$ on $H^2(X)$. 

With the preceding understood, we investigated the following question in \cite{CK1}. Suppose we 
change the smooth structure of the standard $K3$. Is it possible to arrange it so that some of the
group actions on the standard $K3$ are no longer smoothable, and/or some of the groups which
can act smoothly on the standard $K3$ can no longer act smoothly on the exotic $K3$?

\begin{thm}
(Chen-Kwasik \cite{CK1})
There exist infinitely many symplectic homotopy $K3$ surfaces $X_\alpha$ with the exoticness 
$r_{X_\alpha}=3$, such that the following statements hold.
\begin{itemize}
\item [{(1)}] Let $g$ be a holomorphic automorphism of prime order $\geq 7$ of a $K3$ surface,
or an anti-holomorphic involution of a $K3$ surface whose fixed-point set contains a component 
of genus $\geq 2$. Then $g$ is not smoothable on $X_\alpha$. 
\item [{(2)}] Let $G$ be a finite group whose commutator $[G,G]$ contains a subgroup isomorphic 
to $(\Z_2)^4$ or $Q_8=\{i,j,k|i^2=j^2=k^2=-1, ij=k, jk=i,ki=j\}$, where in the case of $Q_8$ the 
elements of order $4$ in the subgroup are conjugate in $G$. Then there are no effective smooth $G$-actions on $X_\alpha$. 
\end{itemize}
\end{thm}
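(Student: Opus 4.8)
The plan is to first build the family $\{X_\alpha\}$ with two controlled features and then to show that the Seiberg--Witten lattice $L_{X_\alpha}$, once fed into the Lefschetz fixed point theorem and the $G$-signature theorem (Propositions 1.1 and 1.2), over-determines any hypothetical smooth action. For the construction I would use the methods of Fintushel--Stern \cite{FS1} --- link surgery, or iterated knot surgery along suitable disjoint, homologically independent tori in a K3 --- to obtain symplectic homotopy K3 surfaces $X_\alpha$ whose Seiberg--Witten basic classes span a rank-$3$ isotropic sublattice, so that $r_{X_\alpha}=3$; and, by arranging these classes to span a Lagrangian sublattice of one of the hyperbolic blocks $H\oplus H\oplus H$ of the K3 lattice, I would ensure that the saturated lattice $L_{X_\alpha}$ is a primitive isotropic rank-$3$ sublattice with $L_{X_\alpha}^\perp/L_{X_\alpha}\cong E_8(-1)\oplus E_8(-1)$. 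Varying the knots/links gives infinitely many pairwise non-diffeomorphic $X_\alpha$, separated by their Seiberg--Witten invariants.

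Here is the basic reduction. By naturality of Seiberg--Witten invariants under diffeomorphisms, any smooth action of a finite group $H$ on $X_\alpha$ preserves $L_{X_\alpha}$, hence the flag $L_{X_\alpha}\subset L_{X_\alpha}^\perp\subset H^2(X_\alpha)$; rationally the subquotients are $L_{X_\alpha}\otimes\Q$ (rank $3$), $E_8(-1)^2\otimes\Q$ (rank $16$), and $(H^2(X_\alpha)/L_{X_\alpha}^\perp)\otimes\Q$ (rank $3$), the last being $\Q[H]$-dual --- hence isomorphic --- to the first. Two consequences: (a) since the isometry group of $E_8(-1)\oplus E_8(-1)$ has order divisible only by $2,3,5,7$, every element of $H$ of prime order $p\ge 11$ acts trivially on the $E_8(-1)^2$-subquotient; and (b) if $H=\Z_p$ with $p\ge 5$, the two rank-$3$ subquotients are trivial $H$-modules (there is no room for the $(p-1)$-dimensional irreducible), so the invariant lattice of $\rho$ has rank $\ge 6$.

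For part (1): the topological conjugacy class of $g$ --- in particular $\rho=g^\ast$ and, for a smooth realization, the locally linear fixed-point data --- is one of the finitely many classified by Nikulin \cite{N} (a symplectic automorphism of prime order $\ge 7$ forces $p=7$) and, in the non-symplectic and anti-holomorphic cases, by Nikulin and Artebani--Sarti--Taki. Suppose $g$ were smoothable on $X_\alpha$. If $p\ge 11$, then by (a) $g^\ast$ is trivial on $E_8(-1)^2$ and on both rank-$3$ subquotients, hence trivial on $H^2(X_\alpha)$ --- contradicting the Torelli theorem. If $p=7$ and $g$ is symplectic, the fixed set is three isolated points, so Proposition 1.1(3) gives $s=0$ and then the Lefschetz formula forces $\rho=\Z[\Z_7]^3\oplus\Z$, whose invariant lattice has rank $4$, contradicting (b). In the remaining cases $g$ has a fixed surface of positive genus (this includes the anti-holomorphic involutions with a genus-$\ge 2$ fixed component), and here I would run Proposition 1.1(3) and the $G$-signature theorem against the adjunction formula for that surface, regarded as a pseudoholomorphic curve for a $\langle g\rangle$-invariant almost complex structure, together with the fact that $L_{X_\alpha}$ has rank $3$, to exclude it.

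For part (2): suppose $G$ acts smoothly and effectively on $X_\alpha$, and fix a subgroup $P\subseteq[G,G]$ isomorphic to $(\Z_2)^4$ or to $Q_8$. Each of these groups has smallest faithful rational representation of dimension $4$, so $P$ cannot act faithfully on the rank-$3$ module $L_{X_\alpha}\otimes\Q$; hence some nontrivial $K\subseteq P$ --- which in the $Q_8$-case must contain the central involution $z$ --- acts trivially on both rank-$3$ subquotients, and therefore on an invariant sublattice of $H^2(X_\alpha)$ of rank $\ge 6$, so that whatever $K$ does to $H^2(X_\alpha)$ it does to the $E_8(-1)^2$-part alone. I would then pin down the fixed-point data of the elements of $P$ by combining the Lefschetz fixed point theorem and the $G$-signature theorem over the prime-order cyclic subgroups of $P$: for each involution the isolated-point signature defects vanish, and by Proposition 1.1(3) every fixed surface is a $2$-sphere, so the $G$-signature theorem reads off the total self-intersection of the fixed spheres. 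In the $Q_8$-case one also uses that the subgroups $\langle i\rangle,\langle j\rangle,\langle k\rangle$ carry identical fixed-point data (their generators being conjugate in $G$) and the residual $(\Z_2)^2=Q_8/\langle z\rangle$-action on $X^z$. The resulting system of constraints will turn out to be incompatible with $r_{X_\alpha}=3$. I expect the first feature of the construction, and the cases $p\ge 11$ and symplectic $p=7$ of part (1), to be routine; the real obstacles are the positive-genus cases of part (1) --- where the group $\langle g\rangle$ need not preserve any symplectic form, so one must substitute an invariant almost complex structure and argue via adjunction and the lattice $L_{X_\alpha}$ --- and the fixed-point bookkeeping in part (2), which is the technical heart of the theorem and precisely where the hypotheses on $[G,G]$ and the $Q_8$-conjugacy condition are used.
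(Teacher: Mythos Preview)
The paper is a survey and does not prove this theorem; it cites \cite{CK1} and records only the decisive input, in the Remark immediately following the statement: the $X_\alpha$ are Fintushel--Stern manifolds, and the crucial lemma (Lemma 4.2 of \cite{CK1}) is that $L_{X_\alpha}^\perp/L_{X_\alpha}\cong E_8(-1)\oplus E_8(-1)$, so that every smooth finite group action on $X_\alpha$ induces a representation on the $E_8$-lattice. Your proposal reproduces exactly this architecture --- the Fintushel--Stern construction, the identification of the $E_8(-1)^2$ subquotient, and the exploitation of the invariant filtration $L_{X_\alpha}\subset L_{X_\alpha}^\perp\subset H^2(X_\alpha)$ --- so at the level of strategy you are aligned with what the paper indicates. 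Your handling of the cases $p\ge 11$ and $p=7$ symplectic in part~(1) is clean and correct.

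Two cautions about the remainder of your sketch, which the survey does not help resolve. First, your trichotomy in part~(1) --- $p\ge 11$, $p=7$ symplectic, and ``remaining cases with a positive-genus fixed surface'' --- does not obviously cover the non-symplectic holomorphic automorphisms of order~$7$, whose fixed loci need not contain a curve of positive genus; and for these your observation~(b) alone (invariant rank $\ge 6$) is not a contradiction, since non-symplectic order-$7$ automorphisms of $K3$ can have invariant lattice of rank well above~$6$. Second, in part~(2) you note that $P\subseteq [G,G]$ cannot act faithfully on the rank-$3$ piece $L_{X_\alpha}\otimes\Q$, but that conclusion uses nothing about the commutator --- it holds for any subgroup isomorphic to $(\Z_2)^4$ or $Q_8$; the hypothesis $P\subseteq [G,G]$ must therefore be doing real work later (e.g.\ forcing triviality on $L_{X_\alpha}$ via the permutation action on basic classes, or constraining determinants on the rank-$3$ pieces), and your sketch does not yet make that explicit. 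These are precisely the places where \cite{CK1} will contain substance beyond what the survey records.
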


\begin{rem}
(1) The automorphism $g$ in Theorem 3.1(1) does exist \cite{N,N1,MO}, which 
provides the first examples of relatively
non-smoothable, locally linear topological actions on a $4$-manifold, i.e., examples of actions which are
smoothable with respect to one smooth structure but non-smoothable with respect to some other smooth
structures.  There were previously known examples of absolutely non-smoothable actions, for example,
there exists a locally linear topological action of order $5$ on $\C\P^2 \# \C\P^2$ which is not smoothable with respect to any smooth structure on $\C\P^2 \# \C\P^2$, see \cite{EE2, HL}. For more recent 
examples of absolutely non-smoothable actions, see e.g. \cite{LN1, LN2}. See also the recent survey
by Edmonds \cite{Ed2}.

(2) Among the $11$ maximal symplectic $K3$ groups (see \cite{Mu, Xiao} for a complete list), there
are $6$ of them satisfying the assumption in Theorem 3.1(2). They are 
$$
M_{20}, F_{384}, A_{4,4}, T_{192}, H_{192}, T_{48}. 
$$
On the other hand, the symplectic $K3$ group of smallest order which satisfies the assumption 
in Theorem 3.1(2) is the binary tetrahedral group $T_{24}$ of order $24$. 

(3) The $X_\alpha$'s in Theorem 3.1 were originally due to Fintushel and Stern \cite{FS1}. The crucial
fact we established in \cite{CK1} (see Lemma 4.2 therein) is that 
$$
L_{X_\alpha}^\perp/L_{X_\alpha}=E_8 (-1)\oplus E_8(-1).
$$
This implies that any smooth $G$-action on $X_\alpha$ induces a $G$-representation on the
$E_8$-lattice, which yields strong informations about the group $G$.
\end{rem}

We also analyzed in \cite{CK1} the fixed point set $X^G$ and local representations $\{\rho_x\}$ of
the symplectic $\Z_p$-actions on $X_\alpha$ which induce a nontrivial action on each of the two
$E_8$-lattices in $L_{X_\alpha}^\perp/L_{X_\alpha}=E_8 (-1)\oplus E_8(-1)$, cf. Theorem 1.8 therein. 
The following question remains open.

\begin{ques}
(Chen-Kwasik \cite{CK1})
Does there exist an exotic $K3$ surface which admits no symplectic $\Z_p$-actions for some $p\leq 19$?
\end{ques}

We remark that for every prime number $p\leq 19$, there is a holomorphic $\Z_p$-action on a $K3$
surface \cite{N,MO}. 

Now we review the main results in \cite{CK2}. Suppose we are given with a symplectic homotopy
$K3$ surface $X$ which admits a symplectic action of a ``large" $K3$ group (e.g., one of the $11$
maximal symplectic $K3$ groups). What can be said about the smooth structure of $X$ (e.g., in
terms of the exoticness $r_X$)? 

\begin{thm}
(Chen-Kwasik \cite{CK2})
Let $G$ be one of the following maximal symplectic $K3$ groups: 
$$
L_2(7), A_6, M_{20}, A_{4,4}, T_{192}, T_{48}
$$ 
and let $X$ be a symplectic homotopy $K3$ surface. If $X$ admits an 
effective symplectic $G$-action, then $X$ must be ``standard'' (i.e., $r_X=0$).
\end{thm}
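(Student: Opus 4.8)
The plan is to argue by contradiction: suppose $X$ carries an effective symplectic $G$-action with $r_X\geq 1$, and fix a $G$-equivariant $\omega$-compatible almost complex structure $J$. By Taubes \cite{T} the Seiberg--Witten basic classes of $X$ span an isotropic primitive sublattice $L_X\subset H^2(X;\Z)$, and, since $G$ acts by orientation-preserving diffeomorphisms (so that $G$-invariant perturbations may be used), the set of basic classes, hence $L_X$, is $G$-invariant. As $H^2(X;\Z)$ is the even unimodular $K3$ lattice and $L_X$ is primitive isotropic of rank $r_X$, the quotient $L_X^{\perp}/L_X$ is again even unimodular, of signature $(3-r_X,\,19-r_X)$, and inherits a $G$-action --- so it is $E_8(-1)^{\oplus 2}$ or $D_{16}^{+}(-1)$ if $r_X=3$, $H\oplus E_8(-1)^{\oplus 2}$ if $r_X=2$, and $H^{\oplus 2}\oplus E_8(-1)^{\oplus 2}$ if $r_X=1$. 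Since $\R[G]$ is semisimple, the filtration $0\subset L_X\subset L_X^{\perp}\subset H^2(X;\Z)$ splits after tensoring with $\R$, giving an isomorphism of $\R[G]$-modules $H^2(X;\R)\cong (L_X\otimes\R)^{\oplus 2}\oplus (L_X^{\perp}/L_X)\otimes\R$, in which the first summand, with the induced cup product, is a hyperbolic space of signature $(r_X,r_X)$.

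The geometric input is the equivariant pseudoholomorphic-curve analysis of \cite{CK,CK1}: for the $G$-action on $(X,\omega,J)$ one controls the singular set $X^{G}$ and the local representations $\{\rho_x\}$, and then, via the Lefschetz fixed-point formula (Proposition 1.1) and the $G$-signature theorem (Proposition 1.2), one pins down the representation $\rho$ on $H^2(X)$. The outcome I would aim for --- and this is the heart of the matter --- is that for each of the six groups $\rho$ must agree with the representation of $G$ as a symplectic automorphism group of the $K3$ lattice (unique up to conjugacy by Nikulin \cite{N}), so that the invariant subspace $H^2(X;\R)^{G}$ is positive definite; equivalently, $G$ fixes a maximal positive-definite subspace of $H^2(X;\R)$ and fixes no nonzero class of negative square. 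Here the hypothesis that $G$ be one of these particular maximal symplectic $K3$ groups enters in an essential way, since it is for these groups that the invariant $K3$ sublattice has rank $3$ (hence is positive definite rather than of signature $(3,k)$ with $k\geq 1$).

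Granting this, the contradiction is short. If $K_X\neq 0$, then $K_X\in L_X$ is a nonzero $G$-invariant class with $K_X^{2}=0$ (it is a diffeomorphism invariant fixed by the orientation- and $\omega$-preserving action), which cannot lie in the positive-definite space $H^2(X;\R)^{G}$. If $K_X=0$ while still $r_X\geq 1$, then Taubes \cite{T} forces $[\omega]\perp L_X$, and one shows $G$ acts trivially on $L_X$: the action factors through the image of $G$ in $GL(r_X,\Z)$, whose order divides $48$ (the maximal order of a finite subgroup of $GL(r,\Z)$ for $r\leq 3$), hence is a quotient of $G$ of order at most $48$; this quotient is trivial for $L_2(7)$, $A_6$ and $M_{20}$, and for $A_{4,4}$, $T_{192}$ and $T_{48}$ the remaining small quotients are ruled out from the fixed-point data. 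Then $L_X\otimes\R\subseteq H^2(X;\R)^{G}$ contains nonzero classes of zero square, again contradicting positive-definiteness; hence $r_X=0$. The main obstacle is the geometric input: controlling $X^{G}$ for non-abelian groups as large as $M_{20}$ (order $960$) requires the full equivariant pseudoholomorphic-curve machinery, and extracting $\rho$ from the fixed-point data is necessarily case-by-case, with $A_{4,4}$, $T_{192}$ and $T_{48}$ --- the groups admitting small nontrivial quotients --- the delicate ones; a secondary difficulty is upgrading the rational information produced by the Lefschetz and $G$-signature formulae to the metric statement that $H^2(X;\R)^{G}$ is genuinely positive definite.
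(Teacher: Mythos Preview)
Your overall framework---exploiting the $G$-invariance of the isotropic lattice $L_X$ and the induced filtration on $H^2$---matches the paper's, but the ``geometric input'' you aim for is much stronger than what the paper actually establishes, and in fact stronger than what is known. You propose to show that the integral representation $\rho$ on the $K3$ lattice coincides (up to conjugacy) with the one coming from a holomorphic symplectic $G$-action, and then read off that $H^2(X;\R)^G$ is positive definite. The paper does \emph{not} prove this; the analogous question already for cyclic $G$ of prime order is left open (see Question~3.5 and the remarks preceding it), and even there only $p=2$ is settled (Proposition~3.1). So the step you yourself flag as ``the heart of the matter'' is not merely a technical obstacle but an unresolved problem, and your argument as written depends on it.

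The paper bypasses this entirely. What is actually needed is not $\rho$ but only the single number $\dim H^2(X;\R)^G$, and this is computed directly from the averaging formula
\[
\dim H^\ast(X;\R)^G \;=\; \frac{1}{|G|}\sum_{g\in G}\mathrm{tr}(g),
\]
with each $\mathrm{tr}(g)=\chi(X^g)$ obtained from the Lefschetz fixed-point theorem once the fixed-point set of $g$ is known (this is where the pseudoholomorphic-curve analysis of \cite{CK} is used). One then combines the identity $\dim H^\ast(X;\R)^G=2+b_2^{+}(X/G)+b_2^{-}(X/G)$ with the elementary inequality
\[
\dim (L_X\otimes\R)^G \;\leq\; \min\bigl(b_2^{+}(X/G),\,b_2^{-}(X/G)\bigr),
\]
which holds simply because $(L_X\otimes\R)^G$ is an isotropic subspace of $H^2(X;\R)^G$. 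For $G=L_2(7)$ or $A_6$ the trace computation gives $\dim H^\ast(X;\R)^G=5$; since these groups have no nontrivial homomorphism to $O(3)$ one has $b_2^{+}(X/G)=3$, hence $b_2^{-}(X/G)=0$, and since they are simple with no nontrivial map to $GL_r(\Z)$ for $r\leq 3$ one gets $r_X=\dim(L_X\otimes\R)^G\leq 0$. The remaining groups $M_{20},A_{4,4},T_{192},T_{48}$ require additional case analysis, but still along these dimension-counting lines rather than by pinning down $\rho$ integrally.

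In short, your target ``$H^2(X;\R)^G$ is positive definite'' is exactly the paper's $b_2^{-}(X/G)=0$, but the paper reaches it by a character computation rather than by first identifying the full lattice representation---a far more tractable task. Once that is in hand, your case split on $K_X$ and the separate analysis of the $G$-action on $L_X$ become unnecessary: the inequality above already forces $\dim(L_X\otimes\R)^G=0$, and for the groups at hand this suffices.
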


The basic idea of the proof may be summarized as follows. 
Using the techniques developed in \cite{CK} and exploiting 
various features of the structure of $G$, one first determines the possible 
fixed point set of an arbitrary element $g\in G$, from which the trace $tr(g)$ of 
$g$ on $H^\ast(X;\R)$ can be computed using the Lefschetz fixed point theorem. 
This leads to an estimate on 
$$
\dim (H^\ast(X;\R))^G=\frac{1}{|G|}\sum_{g\in G}tr(g).
$$
On the other hand,  the following basic inequality
$$
\dim (L_X\otimes_\Z\R)^G\leq \min(b^{+}_2(X/G),b^{-}_2(X/G))
$$
plus the identity $\dim (H^\ast(X;\R))^G=2+b^{+}_2(X/G)+b^{-}_2(X/G)$ 
allows one to obtain information about $\dim (L_X\otimes_\Z\R)^G$ 
and $r_X=\text{rank }L_X$.

For an illustration we consider the case where $G$ is a non-Abelian simple
group. It is easily seen that in this case $b^{+}_2(X/G)=3$ and 
$r_X=\dim (L_X\otimes_\Z\R)^G$. The above basic inequality then becomes 
$$
r_X\leq \min(3, \dim (H^\ast(X;\R))^G-5).
$$
There are three non-Abelian simple $K3$ groups: $L_2(7), A_5$ and $A_6$.
For the case where $G=L_2(7)$ or $A_6$, one can show that 
$\dim (H^\ast(X;\R))^G=5$, so that $r_X=0$. For $G=A_5$, one can only show 
that $\dim (H^\ast(X;\R))^G\leq 8$, so that the basic inequality gives only 
$r_X\leq 3$, which does not yield any restriction on the exoticness $r_X$. 

\begin{ques}
(Chen-Kwasik \cite{CK2})
Suppose a symplectic homotopy $K3$ surface $X$ admits a symplectic $A_5$-action. Is $X$
necessarily ``standard" (i.e., $r_X=0$)?
\end{ques}

Theorem 3.2 gives a characterization of ``standard" symplectic homotopy $K3$ surfaces in terms of
finite symplectic symmetry groups. It naturally raises the question as what can be said about the finite
symplectic symmetry groups of a ``standard" symplectic homotopy $K3$ surface. The answer is given
in the following theorem.

\begin{thm}
(Chen-Kwasik \cite{CK2})
Let $X$ be a ``standard'' symplectic homotopy $K3$ surface (i.e. $r_X=0$) 
and let $G$ be a finite group acting on $X$ via symplectic symmetries. Then 
there exists a short exact sequence of finite groups 
$$
1\rightarrow G_0\rightarrow G\rightarrow G^0\rightarrow 1,
$$
where $G^0$ is cyclic and $G_0$ is a symplectic $K3$ group, such that
$G_0$ is characterized as the maximal subgroup of $G$ with the property
$b_2^{+}(X/G_0)=3$. Moreover, the action of $G_0$ on $X$ has the
same fixed-point set structure $(X^G,\{\rho_x\})$ of a symplectic holomorphic 
action of $G_0$ on a $K3$ surface.
\end{thm}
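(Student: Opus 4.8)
The plan is to analyze the structure of a finite symplectic symmetry group $G$ of a "standard" ($r_X=0$) symplectic homotopy $K3$ surface $X$ by first isolating the subgroup that acts "trivially enough" on the Seiberg--Witten data, and then proving that this subgroup behaves exactly like a holomorphic symplectic $K3$ group. Since $r_X=0$, the lattice $L_X$ has rank $0$, so the canonical class $K_X$ is torsion, hence trivial, and the Seiberg--Witten data imposes no exotic constraint; in particular every $g\in G$ acts on $H^2(X;\R)$ with the spectral behavior of a lattice isometry of the $K3$ lattice, and the basic inequality $\dim(L_X\otimes\R)^{G_0}\le\min(b_2^+(X/G_0),b_2^-(X/G_0))$ degenerates. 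First I would define $G_0\subset G$ to be the subgroup generated by all elements $g$ with $b_2^+(X/\langle h\rangle)=3$ for every $h$ in the subgroup they generate; more efficiently, one shows directly that $\{g\in G: b_2^+(X/G)=3 \text{ is preserved}\}$ is a subgroup, using that $b_2^+(X/H)=\dim(H^{2,+}(X;\R))^H$ is monotone and that $b_2^+=3$ is equivalent to the $H$-action on the (positive-definite part of the) $K3$ lattice being symplectic in the sense of fixing a $2$-form. The key input here is Taubes' theorem together with the techniques of \cite{CK}: a symplectic $G$-action preserves the canonical class, and when $r_X=0$ this forces the induced representation on $H^{2,+}$ to have a specific invariant structure.

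The heart of the argument is to show that $G_0$, so defined, is a symplectic $K3$ group realizing a holomorphic fixed-point structure. For this I would run the Lefschetz/$G$-signature machinery exactly as in the proof of Theorem 3.2: for each $g\in G_0$ one uses the pseudoholomorphic curve techniques of \cite{CK} to pin down the fixed-point set $X^g$ and the local representations $\{\rho_x\}$, compute $\mathrm{tr}(g)$ on $H^\ast(X;\R)$ via Proposition 1.1(2) and Proposition 1.2, and then observe that because $b_2^+(X/G_0)=3$ the invariant subspace $(H^\ast(X;\R))^{G_0}$ has exactly the dimension forced for a symplectic $K3$ group. The point is that the numerical constraints (signature defect formulas, Lefschetz numbers, the rank-$22$ $K3$ lattice) that Nikulin and Mukai used to classify holomorphic symplectic $K3$ groups are \emph{the same} constraints one obtains here; so $G_0$ must appear on Mukai's list, and moreover the fixed-point data $(X^{G_0},\{\rho_x\})$ must match one of the holomorphic models, since these are rigidly determined by the numerics (this is where the $r_X=0$ hypothesis is essential — it removes any Seiberg--Witten obstruction that could distinguish the exotic case).

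Finally I would produce the short exact sequence. The quotient $G^0:=G/G_0$ acts on the quotient orbifold $X/G_0$, and I claim it is cyclic: an element of $G^0$ acts nontrivially on $L_X^\perp/L_X=H^2(X)$ in a way that, combined with symplecticity and the maximality of $G_0$, must strictly decrease $b_2^+$, so the $G^0$-action on the "transcendental" part is by roots of unity — concretely, $G^0$ acts faithfully on $H^{2,0}$ (equivalently, acts on the canonical class data) and hence is cyclic, in exact analogy with the way the non-symplectic part of an automorphism group of a genuine $K3$ surface is cyclic. That $G_0$ is characterized as the \emph{maximal} subgroup with $b_2^+(X/G_0)=3$ follows from its definition once one checks closure under the group operation, which in turn follows from the monotonicity of $H\mapsto b_2^+(X/H)$ and the fact that $b_2^+=3$ is the maximal possible value.

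I expect the main obstacle to be the second paragraph: showing that the numerical data forces $G_0$ onto Mukai's list \emph{and} forces the fixed-point structure to coincide with a holomorphic one, rather than merely constraining it. This requires carefully re-deriving, in the smooth symplectic setting, the rigidity that in the holomorphic category follows from Hodge theory and the global Torelli theorem — here one must substitute the pseudoholomorphic curve analysis of \cite{CK} and the hypothesis $r_X=0$, and verify that no genuinely new (exotic) fixed-point configurations slip through the Lefschetz and $G$-signature inequalities. The subtlety is that the $G$-signature theorem gives a single scalar equation per group element, so one must exploit the full group structure (conjugacy classes, the interplay of cyclic subgroups) to rule out non-holomorphic solutions, which is precisely the kind of case analysis carried out for the six groups in Theorem 3.2 but now needed uniformly.
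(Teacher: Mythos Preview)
The survey paper does not actually prove this theorem: it is stated with attribution to \cite{CK2} and immediately followed by ``Theorem 3.3 raises the following questions,'' with no argument given. So there is no proof here to compare your proposal against. The only nearby proof sketch is the one for the companion Theorem~3.2, and your second paragraph (Lefschetz fixed point theorem, $G$-signature theorem, pseudoholomorphic curves from \cite{CK}) does follow that template.

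That said, independent of the paper, your proposal has a concrete gap in the argument that $G^0$ is cyclic. You write that ``$G^0$ acts faithfully on $H^{2,0}$ \ldots\ and hence is cyclic, in exact analogy with \ldots\ a genuine $K3$ surface.'' But $X$ is only a symplectic $4$-manifold, not a complex surface: there is no Hodge decomposition and no $H^{2,0}$ to invoke. The honest substitute is this. The condition $b_2^{+}(X/H)=3$ is equivalent to $H$ acting trivially on $H^{2,+}(X;\R)$, so $G_0$ is simply the kernel of $G\to O(H^{2,+}(X;\R))$; this also replaces your somewhat circuitous definition of $G_0$ and makes normality and maximality immediate. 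Then $G^0=G/G_0$ embeds faithfully into the orthogonal group of the $3$-dimensional positive-definite space $H^{2,+}(X;\R)$, fixing the line $\R[\omega]$, hence into $O(2)$ acting on the $2$-plane $[\omega]^\perp\cap H^{2,+}$. To conclude \emph{cyclic} rather than dihedral you must show the image lands in $SO(2)$, i.e.\ produce a $G$-invariant orientation on that $2$-plane. In the holomorphic case this orientation comes for free from the complex structure on $H^{2,0}$; in the purely symplectic setting you have to manufacture it (e.g.\ from a $G$-invariant $\omega$-compatible almost complex structure, or via Seiberg--Witten theory), and you have not supplied this step. Without it the short exact sequence with $G^0$ cyclic is not established.

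Your self-identified main obstacle --- forcing $G_0$ onto Mukai's list \emph{and} matching the fixed-point structure to a holomorphic model --- is indeed the substantive content, and the survey gives no further hint about how \cite{CK2} handles it beyond the general strategy already outlined for Theorem~3.2.
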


Theorem 3.3 raises the following questions.

\begin{ques}
(Chen-Kwasik \cite{CK2})
Are there any finite groups other than a $K3$ group which can act smoothly
or symplectically on a homotopy $K3$ surface?
\end{ques}

If a finite group $G$ other than a $K3$ group acts symplectically on a homotopy $K3$ surface $X$
such that $b_2^{+}(X/G)=3$, then $X$ must be an exotic $K3$. It would be interesting to see a
construction of such a homotopy $K3$ surface. Similarly,

\begin{ques}
(Chen-Kwasik \cite{CK2})
Are there any smooth or symplectic actions of a $K3$ group on a homotopy $K3$ surface 
which have a fixed-point set structure $(X^G,\{\rho_x\})$ different than a holomorphic action?
\end{ques}

According to Theorem 3.3, a symplectic $\Z_p$-action on a ``standard" symplectic homotopy $K3$
surface $X$ with $b_2^{+}(X/G)=3$ should have the same fixed-point set structure 
$(X^G,\{\rho_x\})$ of a symplectic 
automorphism of order $p$ of a $K3$ surface. A natural question is: how much will this determine the induced representation $\rho$ on the $K3$ lattice? In particular, does it determine 
$\rho$ completely? Note that in the holomorphic case, the affirmative answer of Nikulin in \cite{N}
relied heavily on the global Torelli theorem for $K3$ surfaces. (Note that over $\Q$, the
induced action is completely determined. )

\begin{ques}
Let a $\Z_p$-action on a homotopy $K3$-surface (locally linear topological, smooth, or symplectic)
be given which has the same fixed-point set structure $(X^G,\{\rho_x\})$ of an order $p$ symplectic automorphism of a $K3$ surface. What can be said about the induced representation $\rho$ 
on the $K3$ lattice?
\end{ques}

For $p=2$, we actually have an affirmative answer (however, for homeomorphism types up to
conjugacy the answer seems not that simple, cf. \cite{CKW}).

\begin{prop}
Let $g$ be a locally linear, orientation-preserving topological involution on a homotopy $K3$ surface 
$X$ which has $8$ isolated fixed points. Then the induced action of $g$ on the $K3$ lattice is the 
same as that of a symplectic automorphism of order $2$ of a $K3$ surface. 
\end{prop}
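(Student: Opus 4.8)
The plan is to pin down the integral $\Z_2$-representation on the $K3$ lattice purely from the numerical data provided by Propositions 1.1 and 1.2, together with the general decomposition $H^2(X)=\Z[\Z_2]^r\oplus\Z^t\oplus\Z[\mu_2]^s$. First I would record that for $p=2$ the cyclotomic summand $\Z[\mu_2]$ has $\Z$-rank $1$ and coincides with the sign representation, so the decomposition really reads $H^2(X)=\Z[\Z_2]^r\oplus\Z^t\oplus(\Z^-)^s$ with $b_2=22=2r+t+s$. Since the fixed-point set consists of $8$ isolated points (and no $2$-dimensional components), Proposition 1.1(2) (Lefschetz) gives $\chi(X^g)=8=t-s+2$, hence $t-s=6$; and Proposition 1.1(3) (Edmonds) gives $s=b_1(X^g;\Z_2)=0$ because $X^g$ is finite. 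Therefore $s=0$, $t=6$, and from $22=2r+6$ we get $r=8$. Thus as an integral representation $H^2(X)\cong\Z[\Z_2]^8\oplus\Z^6$, which is exactly the integral representation induced by a symplectic involution of a $K3$ surface (Nikulin's: fixes three copies of $H$, i.e.\ $\Z^6$ as a $\Z_2$-module, and swaps the two $E_8(-1)$'s, i.e.\ $\Z[\Z_2]^8$). This already determines $\rho$ up to the equivalence of integral $\Z_2$-representations.

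The only genuine subtlety is that an integral $\Z_2$-representation on a lattice is not determined by its isomorphism class as a $\Z[\Z_2]$-module alone — one must also see that the cup-product form is the $K3$ form and that the module decomposition is compatible with it in the way Nikulin's involution realizes. Here I would invoke the classification of $\Z[\Z_2]$-lattices (Reiner; cf.\ the reference \cite{KS} already cited): an integral $\Z_2$-lattice is, up to isomorphism, a direct sum of copies of $\Z^+$, $\Z^-$, and $\Z[\Z_2]$, and the possible invariant forms are then controlled by the discriminant and the behavior of the form on the $\pm1$-eigenlattices after tensoring with $\Q$. The invariant lattice $(H^2(X))^g$ has rank $r+t=14$, the anti-invariant lattice $(H^2(X))^{g=-1}$ has rank $r+s=8$, and since $H^2(X)$ is unimodular and even, standard lattice theory forces the invariant and anti-invariant sublattices to be $2$-elementary with prescribed discriminant forms; by Nikulin's uniqueness results for indefinite $2$-elementary even lattices, the invariant lattice must be $H^{\oplus3}\oplus D_8(-1)$-type data matching a symplectic involution, and likewise for the anti-invariant part. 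The rank count $(14,8)$ with $s=0$ is precisely Nikulin's list entry for the symplectic involution, and no other entry has these ranks, so the representation is forced.

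The main obstacle, then, is not the rational or even the integral module structure — those follow immediately from the Lefschetz and Edmonds constraints — but the lattice-theoretic step: verifying that the pair (invariant lattice, anti-invariant lattice) together with their gluing inside the unimodular $K3$ lattice is uniquely determined by the ranks $(14,8)$ and the condition $s=0$. I would handle this by citing Nikulin's theory of discriminant forms and the uniqueness of even indefinite $2$-elementary lattices with given signature and discriminant-form invariants $(\ell, \delta)$: the constraint $s=0$ (no free summand in $H^2$) translates into the invariant lattice being $2$-elementary, which together with the signature $(3,11)$ and the length $\ell=8$ coming from $r=8$ leaves exactly one isomorphism type, realized by the known symplectic involution. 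Assembling these, the induced action of $g$ on $H^2(X)$ is conjugate over $\Z$ — as an isometry — to that of an order-$2$ symplectic automorphism of a genuine $K3$ surface, which is the assertion. I expect the write-up to be short once the reader grants Propositions 1.1–1.2; the one place where care is needed is making sure that "same as a symplectic automorphism" is being claimed at the level of the integral isometry and not merely the rational representation, and that the $2$-elementary lattice uniqueness is quoted in the correct form.
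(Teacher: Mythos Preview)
Your opening move --- using Propositions 1.1(2) and 1.1(3) to pin down the integral $\Z[\Z_2]$-module as $\Z[\Z_2]^8\oplus\Z^6$ --- agrees with the paper and is correct. But from there your lattice-classification route has a genuine gap. Nikulin's uniqueness theorem for even $2$-elementary lattices requires not only the signature and the length $a$ of the discriminant group, but also the invariant $\delta\in\{0,1\}$ recording whether the discriminant quadratic form is $\Z/2\Z$-valued; equivalently, $\delta=0$ iff every $u\in S_G$ has $(u,u)\equiv 0\pmod 4$. Both values of $\delta$ occur for rank-$8$ negative-definite even $2$-elementary lattices with $a=8$: one gets $E_8(-2)$ when $\delta=0$ but, for instance, $\langle -2\rangle^{\oplus 8}$ when $\delta=1$. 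Nothing in your argument distinguishes these, so you cannot yet conclude that $S_G$ is the coinvariant lattice of a symplectic involution. (Two smaller issues: the signature $(3,11)$ for $L^g$ that you assert requires the $G$-signature theorem, which you never invoke; and the invariant lattice of a symplectic $K3$ involution is $H^{\oplus 3}\oplus E_8(-2)$, not $H^{\oplus 3}\oplus D_8(-1)$ --- these have discriminant lengths $8$ and $2$ respectively.)

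The paper closes exactly this gap, by a different mechanism. It applies a criterion of Nikulin (Theorem 4.3 in \cite{N}): $G\subset O(L)$ is induced by a symplectic automorphism provided $S_G=(L^G)^\perp$ is negative definite of rank $\le 18$ and contains no vector of square $-2$. Negative definiteness and the rank bound come from the $G$-signature theorem. The crucial no-$(-2)$-vector condition is established by writing any $u\in S_G$ as $u=v-gv$ (possible precisely because $s=0$), computing $(u,u)=2(v,v)-2(v,gv)$, and then using Proposition 1.1(4) to get $(v,gv)\equiv 0\pmod 2$ together with evenness of $L$ to get $(v,v)\equiv 0\pmod 2$, whence $(u,u)\equiv 0\pmod 4$. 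This divisibility-by-$4$ computation is precisely what would yield $\delta=0$ in your framework; Proposition 1.1(4) is the missing ingredient in your proposal.
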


\begin{proof}
First of all, the Lefschetz fixed point theorem and the $G$-signature theorem imply that 
$\text{Sign} (X/\langle g\rangle)=-8$ and $b_2^{+}(X/\langle g\rangle)=3$. With this understood,
the proof is based on the following theorem of Nikulin \cite{N}. 

Denote by $L$ the $K3$ lattice. Suppose $G\subset O(L)$ is a finite subgroup of isometries of $L$.
Let $L^G$ denote the sublattice of $L$ fixed by $G$, and let $S_G\equiv (L^G)^{\perp}$ be
the orthogonal complement of $L^G$ in $L$. 

\begin{thm}
(Nikulin, Theorem 4.3 in \cite{N}) A finite subgroup $G\subset O(L)$ is realized as the induced action
of a finite symplectic automorphism group of a $K3$ surface if the following conditions hold:
\begin{itemize}
\item [{(a)}] $S_G$ is negative definite,
\item [{(b)}] $S_G$ does not have any elements with square $-2$,
\item [{(c)}] $\text{rank }S_G\leq 18$. 
\end{itemize}
\end{thm}

Now let $G\equiv\Z_2\subset O(L)$ be the subgroup of isometries of the $K3$ lattice induced by $g$. 
We shall
verify that the conditions (a), (b), (c) in Theorem 3.5 are satisfied. First of all, $S_G$ is negative definite
because $b_2^{+}(X/\langle g\rangle)=3=b_2^{+}(X)$. As for condition (c), 
$$
\text{rank }S_G=22-2b_2^{+}(X/\langle g\rangle)+\text{Sign} (X/\langle g\rangle)=8\leq 18.
$$

The verification of condition (b) is more involved. First of all, recall that by a theorem 
of Kwasik and Schultz 
(cf. \cite{KS}), the integral representation of $G\equiv\Z_2$ on $L$ can be expressed as a sum of copies
of the group ring $\Z[\Z_2]$, the trivial representation $\Z$, and the representation $\Z[\mu_2]$ of
cyclotomic type. By Proposition 1.1(3), there should be no summands of
cyclotomic type in the representation. With this said, there are elements
$x_1, \cdots, x_n, y_1,\cdots, y_m$ of $L$ such that the $x_i$'s are fixed by $g$ and 
$x_1, \cdots, x_n, y_1,\cdots, y_m$, $gy_1,\cdots, gy_m$ form a $\Z$-basis of $L$.  Now let $u\in S_G$
be any element. Note that $u$ obeys $gu=-u$. We write
$$
u=a_1 x_1+\cdots +a_nx_n +b_1y_1+\cdots +b_my_m +c_1 gy_1+\cdots +c_m gy_m.
$$
Then $gu=-u$ is equivalent to 
$$
2a_1 x_1+\cdots +2a_nx_n +(b_1+c_1)y_1+\cdots +(b_m+c_m)y_m +(b_1+c_1) 
gy_1+\cdots +(b_m+c_m) gy_m=0,
$$
which implies that $a_i=0$ for all $i$, and $b_j=-c_j$ for all $j$. Consequently,
$$
u=v-gv, \mbox{ where } v=b_1y_1+\cdots +b_my_m.
$$

Computing the square of $u$, we have 
$$
(u,u)=(v,v)-(v,gv)-(gv,v)+(gv,gv)=2(v,v)-2(v,gv).
$$
By Proposition 1.1(4), $(v,gv)=0 \pmod{2}$. On the other hand, $(v,v)=0\pmod{2}$ since $L$ is even. 
This shows that $(u,u)=0 \pmod{4}$. Particularly, $u$ does not have
square $-2$, and the condition (b) in Theorem 3.5 is verified. 
\end{proof}

One way to construct symplectic $\Z_p$-actions on the standard $K3$ surface is to fix a $C^\infty$-elliptic
fibration on the $K3$ surface and consider fibration-preserving $\Z_p$-actions. (It is a standard argument
to show that these actions are symplectic.)

\begin{ques}
Suppose a smooth $\Z_p$-action on the standard $K3$ surface preserves a $C^\infty$-elliptic fibration. 
Is it necessarily smoothly conjugate to a holomorphic action? What can be said about the induced
action on the $K3$ lattice?
\end{ques}

Finally, call a finite group $G$ (particularly a $K3$ group) ``small" if $G$ can act on a symplectic 
homotopy $K3$ surface $X$ with $r_X=3$ via symplectic symmetries. 

\begin{ques}
(Chen-Kwasik \cite{CK2})
What can be said about the set of ``small" groups?
\end{ques}

In \cite{CK2} we showed that $(\Z_2)^3$ is ``small".

\begin{thm}
(Chen-Kwasik \cite{CK2})
Let $G\equiv (\Z_2)^3$. There exists an infinite family of distinct symplectic homotopy 
$K3$ surfaces with maximal exoticness (i.e. $r_X=3$), such that each member
of the exotic $K3$'s admits an effective $G$-action via symplectic symmetries.
Moreover, the $G$-action is pseudofree and induces a trivial action on the
lattice $L_X$ of the Seiberg-Witten basic classes. 
\end{thm}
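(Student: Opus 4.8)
The plan is to produce the exotic $K3$ surfaces by an equivariant version of the Fintushel--Stern knot surgery construction, carried out along a $(\Z_2)^3$-invariant cusp fiber, and then to verify the three claimed properties (maximal exoticness, pseudofreeness, triviality on $L_X$) using the Seiberg--Witten formula for knot surgery together with the local structure theory for symplectic $\Z_p$-actions from \cite{CK}. First I would locate a holomorphic $(\Z_2)^3$-action on the standard $K3$ surface that preserves an elliptic fibration $X_0 \to \SS^2$ and admits at least three (in fact a positive-dimensional family is not needed, but several are) smooth fishtail/cusp fibers that are permuted among themselves or fixed, with the action on a neighborhood of a chosen cusp fiber $F$ being fiberwise. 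The existence of such an action follows from the classification of symplectic $K3$ groups (Mukai \cite{Mu}) — $(\Z_2)^3$ is a symplectic $K3$ group — combined with the fact that a generic such $K3$ admits a Jacobian elliptic fibration on which the group acts fiber-preservingly; one arranges three disjoint invariant cusp neighborhoods $\nu F_1, \nu F_2, \nu F_3$, each with the group acting by a product action $\Z_2 \times (\text{fiberwise})$.

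Next I would perform knot surgery $X_\alpha = (X_0 \setminus \bigsqcup \nu F_i) \cup_{\phi} \bigsqcup (\SS^1 \times (\SS^3 \setminus \nu K_i))$ using fibered knots $K_i$ of genus $g_i \ge 1$, equivariantly: since the group acts trivially on the $\SS^1$ and $K3$-base-disk factors near each $F_i$, the surgery can be performed $(\Z_2)^3$-equivariantly, extending the gluing by the identity on the knot-complement factor. Taubes' theorem \cite{T} and the Fintushel--Stern formula give $\mathcal{SW}_{X_\alpha} = \mathcal{SW}_{X_0} \cdot \prod_i \Delta_{K_i}(t_i^2)$ where $t_i$ is the class of the rim torus $T_i$; choosing the $K_i$ with Alexander polynomials having distinct spans one both realizes infinitely many distinct smooth structures and forces the basic classes of $X_\alpha$ to be supported on the rank-$3$ isotropic sublattice $L_{X_\alpha}$ spanned by the $[T_i]$ (this is exactly the mechanism behind Remark 3.1(3) and Fintushel--Stern \cite{FS1}), giving $r_{X_\alpha} = 3$. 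Since the rim tori $T_i$ lie in the part of $H^2$ coming from the invariant cusp neighborhoods and the group acts trivially there, the induced $G$-action on $L_{X_\alpha}$ is trivial.

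For pseudofreeness: the $(\Z_2)^3$-action on $X_0$ one starts with can be taken so that each non-identity element has only isolated fixed points (a $\Z_2 \subset (\Z_2)^3$ with a fixed $2$-sphere would, by Proposition 1.1(4) and the $G$-signature theorem applied on a $K3$, be incompatible with the simultaneous triviality of the full $(\Z_2)^3$ on $L$, or can simply be excluded by picking the specific Mukai model); the surgery region contributes no new fixed points because the group acts freely on each $\SS^1 \times (\SS^3 \setminus \nu K_i)$ factor (the $\SS^1$-factor has the trivial action but is glued to a free action on the neighboring $\SS^1 \times (\text{punctured disk bundle})$ region, so one must instead choose the identification so that the surgered piece carries a genuinely free action — this is where care is required). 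The main obstacle, and the step I expect to absorb most of the work, is precisely this compatibility: arranging a single $(\Z_2)^3$-action that is simultaneously fiber-preserving near three disjoint cusp fibers, pseudofree, symplectic, and induces the \emph{trivial} representation on the specific rank-$3$ lattice $L_{X_\alpha}$ produced by the surgery. Once the equivariant model on $X_0$ is pinned down, that $X_\alpha$ is symplectic follows from the standard fact that knot surgery along a symplectic torus using a fibered knot preserves symplecticity (Fintushel--Stern, Etgü), and equivariance of the symplectic form is obtained by averaging over the finite group; the rest is bookkeeping with the Seiberg--Witten product formula and the Lefschetz/$G$-signature constraints of Propositions 1.1 and 1.2.
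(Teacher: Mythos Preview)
The paper itself does not contain a proof of this theorem; it is a survey, and the result is simply quoted from \cite{CK2}. So there is no ``paper's own proof'' to compare against. That said, your proposal has a genuine gap that would prevent it from working as written, independent of what \cite{CK2} actually does.

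The problem is in how you obtain $r_X=3$. You take three cusp (or fishtail) fibers $F_1,F_2,F_3$ of a \emph{single} elliptic fibration $X_0\to\SS^2$ and perform knot surgery on each. But all fibers of one elliptic fibration are homologous: $[F_1]=[F_2]=[F_3]=[F]$. The Fintushel--Stern formula then gives basic classes supported only on multiples of the single class $[F]$, so $L_{X_\alpha}$ has rank at most $1$, not $3$. (Indeed, as noted in Example~4.1 of this very paper, repeated knot surgery on parallel copies of a regular fiber is equivalent to a single knot surgery using the connected sum of the knots.) Your invocation of ``rim tori $T_i$'' does not help: knot surgery on a torus with simply connected complement does not change $H_2$, and the variable $t_i$ in $\Delta_{K_i}(t_i^2)$ is $\exp([F_i])$, not a new class.

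To get $r_X=3$ one must perform the three surgeries on tori whose homology classes are \emph{linearly independent} --- e.g.\ three disjoint square-zero tori sitting in the three hyperbolic summands of $H^2(K3)=3H\oplus 2E_8(-1)$, each with a cusp neighborhood. This is exactly what underlies Remark~3.1(3) and Lemma~4.2 of \cite{CK1}. The real work, then, is to exhibit such a triple of tori that is $(\Z_2)^3$-invariant (with the action free on each torus so that it extends across the surgered pieces), and this cannot be arranged within a single elliptic fibration. Your discussion of pseudofreeness is also more tentative than necessary: for a holomorphic symplectic $(\Z_2)^3$-action on a $K3$ surface, Nikulin's classification already forces every nontrivial element to have exactly $8$ isolated fixed points, so pseudofreeness on $X_0$ is automatic once you pick a symplectic $K3$ model; the only issue is keeping the fixed points away from the surgery tori.
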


\section{Rigidity of group actions}
The main theme in \cite{C4} concerns boundedness of $\Z_p$-actions on a given
$4$-manifold. Of course, there are necessary conditions that have to be imposed.  By Edmonds'
theorem (cf. Theorem 1.4), it is necessary to consider at least smooth actions. On the other hand,
it is clear that if the manifold admits a smooth $\SS^1$-action, there is no chance for a bound on 
the order to exist. With this understood, the main question in \cite{C4} can be formulated as follows.

\begin{ques}
(Chen \cite{C4})
Let $X$ be a smooth $4$-manifold which admits no smooth $\SS^1$-actions. Does there 
exist a constant $C>0$ such that there are no nontrivial smooth $\Z_p$-actions on $X$ for any prime
number $p>C$? Moreover, suppose such a constant $C$ does exist,  what structures of $X$ does 
$C$ depend on?
\end{ques}

Before we state the theorems, it is helpful to recall the relevant results regarding the existence of
smooth $\SS^1$-actions on $4$-manifolds. First, with the resolution of the $3$-dimensional Poincar\'{e}
conjecture by Perelman, an old theorem of Fintushel \cite{F} may be strengthened to the following:
a simply connected $4$-manifold admitting a smooth $\SS^1$-action must be a connected sum of
copies of $\SS^4$, $\pm \C\P^2$, and $\SS^2\times\SS^2$. For the non-simply connected case, one 
has the following useful criterion due to Baldridge \cite{Bald} : 
Let $X$ be a $4$-manifold admitting a smooth $\SS^1$-action with nonempty fixed point set. 
Then $X$ has vanishing Seiberg-Witten invariant when $b_2^{+}>1$, and when $b_2^{+}=1$ and $X$ is symplectic, $X$ is diffeomorphic to a rational or ruled surface. For fixed-point free smooth
$\SS^1$-actions, formulas relating the Seiberg-Witten invariants of the $4$-manifold and the 
quotient $3$-orbifold were given in Baldridge \cite{B1,B2}. Note that if a $4$-manifold admits 
a fixed-point free $\SS^1$-action, the Euler characteristic and the signature of the manifold must vanish. 

With this understood, it is instructive to first look at the case of holomorphic actions, as
these are the primary source of
smooth actions on $4$-manifolds. Based on Xiao's generalization of Hurwitz's theorem in 
\cite{Xiao1, Xiao2} and the techniques developed by Ueno \cite{Ueno} and Peters \cite{Peters} 
regarding homological rigidity of holomorphic actions on elliptic surfaces, we obtained the 
following result. 

\begin{thm}
(Chen \cite{C4})
Let $X$ be a compact complex surface with Kodaira dimension $\kappa(X)\geq 0$. Suppose 
$X$ does not admit any holomorphic $\SS^1$-actions. Then there exists a constant $C>0$ such that
there are no nontrivial holomorphic $\Z_p$-actions of prime order on $X$ provided that $p>C$. 
Moreover, the constant $C$ depends linearly on the Betti numbers of $X$ and the order of the
torsion subgroup of $H_2(X)$, i.e., there exists a universal constant $c>0$ such that
$$
C=c(1+b_1+b_2+|Tor\; H_2|).
$$
\end{thm}

A natural question is that to what extent Theorem 4.1 holds also in the symplectic category. 
Let $(X,\omega)$ be a symplectic $4$-manifold such that $[\omega]\in H^2(X;\Q)$.  Denote by 
$N_\omega$ the smallest positive integer such that $[N_\omega\omega]$ is an integral class. We set
$$
C_\omega\equiv N_\omega c_1(K)\cdot [\omega],
$$ 
where $K$ is the canonical bundle of $(X,\omega)$. 

\begin{thm}
(Chen \cite{C4})
Let $(X,\omega)$ be a symplectic $4$-manifold with $b^{+}_2>1$ and
$[\omega]\in H^2(X;\Q)$, which does not satisfy the following condition: $X$ is minimal with vanishing
Euler characteristic and signature. Then there exists a constant $C>0$:
$$
C=c(1+b_1^2+b_2^2)C_\omega^2
$$
where $c>0$ is a universal constant, such that there are no nontrivial symplectic $\Z_p$-actions 
of prime order on $X$ provided that $p>C$. 
\end{thm}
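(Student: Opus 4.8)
The plan is to bound the order $p$ of a prime-order symplectic $\Z_p$-action by controlling two competing quantities: the dimension of the $\Z_p$-invariant part of cohomology (which is small because the action is nontrivial) and a contribution to the $G$-signature/Lefschetz formulas coming from the fixed point set (which, after using Taubes-type constraints, forces $p$ to be small). Concretely, suppose $\Z_p=\langle g\rangle$ acts symplectically on $(X,\omega)$ with $p$ large; after averaging we may assume $g^\ast\omega=\omega$, and we fix a $G$-equivariant $\omega$-compatible almost complex structure $J$. The first step is to analyze the fixed point set $X^{\Z_p}$ via the adjunction/Taubes machinery of \cite{CK, T}: each $2$-dimensional component $Y\subset X^{\Z_p}$ is $J$-holomorphic, and since $b_2^+>1$ the canonical class $K$ is (up to sign) represented by $J$-holomorphic curves, so $Y$ contributes to $c_1(K)\cdot[\omega]$ a definite positive amount bounded below in terms of $N_\omega$ and the area of $Y$. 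This is the step where the hypothesis ``$X$ is not minimal with vanishing Euler characteristic and signature'' enters — it rules out the one family of manifolds (e.g. those with $S^1$-actions) where $K\cdot[\omega]$ could vanish or $X^{\Z_p}$ be empty with no constraint. The number of isolated fixed points and the genera and self-intersections of the surface components are then bounded in terms of $b_1, b_2$ and $C_\omega$.

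The second step is to feed this fixed point data into the $G$-signature theorem (Proposition 1.2) and the Lefschetz fixed point formula (Proposition 1.1(2)). The signature defects $\mathrm{def}_m=\sum_{\lambda^p=1,\lambda\neq 1}\frac{(1+\lambda^{a_m})(1+\lambda^{b_m})}{(1-\lambda^{a_m})(1-\lambda^{b_m})}$ and $\mathrm{def}_Y=\frac{p^2-1}{3}(Y\cdot Y)$ grow with $p$: in particular $\mathrm{def}_Y$ is quadratic in $p$ and, for a fixed $J$-holomorphic surface $Y$ of controlled self-intersection, this is the dominant term. Comparing $p\cdot\mathrm{Sign}(X/\Z_p)=\mathrm{Sign}(X)+\sum_m\mathrm{def}_m+\sum_Y\mathrm{def}_Y$ with the a priori bound $|\mathrm{Sign}(X/\Z_p)|\leq b_2(X/\Z_p)\leq \dim H^2(X;\R)^{\Z_p}$ gives a bound on $p$ once we know $Y\cdot Y$ is controlled. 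The key algebraic input is that a nontrivial $\Z_p$-action of large prime order on $H^2(X)$ must contain a regular summand $\Z[\Z_p]$ (by the Kwasik–Schultz decomposition $H^2(X)=\Z[\Z_p]^r\oplus\Z^t\oplus\Z[\mu_p]^s$ with $rp+t+s(p-1)=b_2$), so if $p>b_2+1$ then $r\geq 1$ and the invariant rank $t+s$ drops, while Proposition 1.1(2)–(3) ties $t,s$ to $\chi(X^{\Z_p})$ and $b_1(X^{\Z_p};\Z_p)$.

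The third step is to assemble these into the stated explicit constant. From the Taubes constraints, the ``size'' of the fixed point set (number of isolated points, plus $\sum_Y(1-g(Y))$, plus $\sum_Y|Y\cdot Y|$) is bounded by a universal multiple of $(1+b_1)C_\omega$ or so, using the adjunction formula $2g(Y)-2=Y\cdot Y+K\cdot Y$ together with $K\cdot Y\leq C_\omega/N_\omega$ and positivity of areas; squaring appears because one controls both $Y\cdot Y$ and the pairing $K\cdot[\omega]$ independently and their product enters the signature defect. Plugging into the $G$-signature identity and using $|\mathrm{Sign}(X)|\leq b_2$ and $|b_2(X/\Z_p)|\leq b_2$ then yields $p\leq c(1+b_1^2+b_2^2)C_\omega^2$ for a universal $c$. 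The main obstacle, I expect, is the fixed-point-set analysis in the first step: one must show that a large-order symplectic $\Z_p$-action cannot have an ``invisible'' fixed point set (either empty, or consisting only of isolated points whose rotation numbers conspire to keep $\mathrm{def}_m$ small) — this is exactly where near-symplectic / Taubes $\mathrm{SW}=\mathrm{Gr}$ technology is needed to produce the $J$-holomorphic surface components and bound their self-intersections, and where the minimality-plus-vanishing-$\chi$-and-$\sigma$ exclusion is genuinely used; the rest is comparing growth rates and bookkeeping the constants.
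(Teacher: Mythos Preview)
This survey does not contain a proof of Theorem 4.2; it is quoted from \cite{C4}. So there is no ``paper's own proof'' to compare against here. That said, your outline has a genuine gap in the pseudoholomorphic-curve step, and the direction of the key argument is inverted relative to the techniques the survey points to in \cite{CK, C4}.

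The problematic claim is that each $2$-dimensional fixed component $Y\subset X^{\Z_p}$ ``contributes to $c_1(K)\cdot[\omega]$ a definite positive amount bounded below in terms of $N_\omega$.'' A fixed $J$-holomorphic surface $Y$ need not appear in any Taubes representative of $c_1(K)$ at all (e.g.\ a $(-2)$-sphere with $K\cdot Y=0$), so there is no mechanism here that bounds the number of fixed surfaces, their genera, or $|Y\cdot Y|$ by a multiple of $C_\omega$. Your third-step bookkeeping therefore has no foundation, and the isolated-point defects $\mathrm{def}_m$ --- which can individually be of order $p$ for suitable rotation numbers --- remain uncontrolled.

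The argument in \cite{C4, CK} runs the other way. Taubes' $\mathrm{SW}\Rightarrow\mathrm{Gr}$ (using $b_2^+>1$) produces a $J$-holomorphic representative $\sum n_iC_i$ of $c_1(K)$ for the $G$-equivariant $J$; since $c_1(K)$ is canonical and $J$ is equivariant, $G$ permutes the $C_i$. The rationality hypothesis $[\omega]\in H^2(X;\Q)$ enters exactly here: each component has area $\geq 1/N_\omega$, so the number of components is at most $C_\omega$, and adjunction bounds their genera in terms of $c_1(K)^2$ and $C_\omega$. Once $p$ exceeds this complexity, every $C_i$ is $G$-invariant and $G$ acts on each $C_i$ by automorphisms; Riemann--Hurwitz then pins down the fixed points of $g$ lying on $\cup C_i$ together with their local rotation data $(a_m,b_m)$ and the normal weights $c_Y$. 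It is \emph{this} controlled fixed-point package --- coming from the Taubes curve, not from the raw fixed set --- that one feeds into the Lefschetz and $G$-signature formulas to extract the bound on $p$. The excluded case ($X$ minimal with $\chi=\sigma=0$) is precisely where $c_1(K)$ may be zero or torsion and the Taubes curve is empty, so the whole mechanism collapses; your explanation of why that hypothesis is needed (ruling out $\SS^1$-actions or empty fixed sets directly) misses the actual role it plays.
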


A new phenomenon arising from Theorem 4.2 was that the constant $C$ may also depend on 
the smooth structure of the $4$-manifold, as $c_1(K)$ is a Seiberg-Witten basic class and
$C_\omega=N_\omega c_1(K)\cdot [\omega]$. Even though the following example does not say 
anything about the situation in Theorem 4.2, it does however suggest that the constant $C$ in 
Question 4.1 should be more than topological in nature. 

\begin{exm}
(Chen \cite{C4})

Let $X_0$ be the smooth rational elliptic surface given by the Weierstrass 
equation
$$
y^2z=x^3+v^5z^3.
$$
For any prime number $p\geq 5$, one can define an order-$p$ automorphism 
$g$ of $X_0$ as follows (cf. \cite{Zhang}):
$$
g: (x,y,z;v)\mapsto (\mu_p^{-5} x,y,\mu_p^{-15}z;\mu_p^6v),\;\; 
\mu_p\equiv\exp(2\pi i/p).
$$
Note that $g$ preserves the elliptic fibration and leaves exactly the two 
singular fibers (at $v=0$ and $v=\infty$) invariant.

Let $X_p$ be the symplectic $4$-manifold obtained from $X_0$ by 
performing repeated knot surgery (cf. \cite{FS1}) using the trefoil knot
on $p$ copies of regular fibers of the elliptic fibration which 
are invariant under the order-$p$ automorphism $g$. Then
it is clear that $X_p$ inherits a periodic symplectomorphism of order $p$.
On the other hand, $X_p$ is homeomorphic to $X_0$. (The key point is that
$X_p$ continues to be simply connected, as repeated knot surgery on parallel
copies of a regular fiber is equivalent to a single knot surgery using the connected 
sum of the knots, see \cite{C4} for more details.) 

To see that $X_p$ admits no smooth $\SS^1$-actions, notice that the canonical class 
of $X_p$ is given by the formula 
$$
c_1(K_{X_p})=(2p-1)\cdot [T], 
$$
where $[T]$ is the fiber class of the elliptic fibration which pairs 
positively with the symplectic form $\omega$ on $X_p$. 
Since $c_1(K_{X_p})\cdot [\omega]>0$ and $c_1(K_{X_p})^2=0$, 
$X_p$ is neither rational nor ruled, cf. \cite{Li}. 
By Baldridge's theorem \cite{Bald} or the strengthened version of the theorem of Fintushel 
in \cite{F}, $X_p$ does not admit any smooth $\SS^1$-actions.  

We thus obtained, for any prime number $p\geq 5$, a symplectic 
$4$-manifold $X_p$ homeomorphic to the rational elliptic surface, 
which admits no smooth $\SS^1$-actions but has 
a periodic symplectomorphism of order $p$. Observe that if $[\omega]$ is integral
(this can be arranged), the order $p$ satisfies 
$$
p\leq \frac{1}{2}(c_1(K_{X_p})\cdot [\omega]+1).
$$
\end{exm}

Note that in Example 4.1, the manifold $X_p$ has $b_2^{+}=1$. 
It is not clear, however, that with the condition $b_2^{+}>1$, whether 
the dependence of the constant $C$ on $\omega$ (as well as the 
assumption $[\omega]\in H^2(X;\Q)$) in Theorem 4.2 can be removed or not.
Note that  because of Theorem 4.1, the construction in Example 4.1 does not extend 
to the $b_2^{+}>1$ 
case by simply replacing the rational elliptic surface with some other elliptic surfaces 
with $b_2^{+}>1$.

\vspace{3mm}

The following is a symplectic analog of Xiao's theorem in \cite{Xiao1}.

\begin{ques}
(Chen \cite{C4})
Let $(X,\omega)$ be a minimal symplectic $4$-manifold of Kodaira
dimension $2$ (i.e., $c_1(K)\cdot [\omega]>0$ and $c_1(K)^2>0$, cf. \cite{Li}). Does there exists a 
universal constant $c>0$, such that
$$
|G|\leq c \cdot c_1(K)^2
$$
for any finite group $G$ of symplectomorphisms of $(X,\omega)$? 
\end{ques}

Note that the manifolds $X_p$ in Example 4.1 are minimal 
(cf. Gompf \cite{G} and Usher \cite{Usher}) with $c_1(K_{X_p})^2=0$,
so they are not of Kodaira dimension $2$ and do not give counterexamples.

\begin{ques}
(Chen \cite{C4})
For the general case of smooth $\Z_p$-actions, is the constant $C$ in Question 4.1
``describable" in terms of invariants of the manifold? In particular, what invariants of the smooth
structure (e.g.  SW basic classes, SW invariants, etc.) will enter the constant and how do they 
enter the constant? 
\end{ques}

The existence of smooth $\SS^1$-actions depends on the smooth structure in general (as shown 
by Example 4.1). However, by a theorem of Atiyah and Hirzebruch \cite{AH}, a spin 
$4$-manifold with non-zero signature does not admit smooth $\SS^1$-actions for any given smooth
structure. Example 4.1 naturally suggests the following question.

\begin{ques}
(Chen \cite{C4})
Let $X$ be a simply connected smoothable $4$-manifold with even intersection form and non-zero signature. Does there exist a constant $C>0$ depending on the topological type of $X$ only, such 
that for any prime number $p>C$, there are no $\Z_p$-actions on $X$ which are smooth with respect to
some smooth structure on $X$?
\end{ques}

We end this survey with two results concerning homological rigidity of symplectic $\Z_p$-actions from 
\cite{CK}. The following theorem extends the homological rigidity of holomorphic actions on a $K3$
surface to the symplectic category.

\begin{thm}
(Chen-Kwasik \cite{CK})
There are no homologically trivial, symplectic actions of a finite group on the standard $K3$
surface (with respect to any symplectic structure). 
\end{thm}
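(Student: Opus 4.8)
The plan is to reduce to a cyclic group of prime order and then set the Lefschetz fixed point theorem against the structure theory of symplectic actions on ``standard'' symplectic homotopy $K3$ surfaces. Suppose, to the contrary, that a nontrivial finite group acts homologically trivially and symplectically on the standard $K3$ surface $X$ with respect to some symplectic form $\omega$. Passing to the cyclic subgroup generated by an element of prime order, we may assume the acting group is $G\equiv\Z_p$ for a prime $p$; its action on $X$ is still nontrivial, and still trivial on $H^2(X)$. Since $X$ is the standard $K3$ it has trivial canonical bundle, hence $r_X=0$ and $(X,\omega)$ is a ``standard'' symplectic homotopy $K3$ surface in the sense of Section 3.

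Next I would read off the numerical content of homological triviality. In the decomposition $H^2(X)=\Z[\Z_p]^r\oplus\Z^t\oplus\Z[\mu_p]^s$, triviality of the $\Z_p$-action forces $r=s=0$ and $t=b_2(X)=22$, so Proposition 1.1(2) gives $\chi(X^G)=t-s+2=24$. On the other hand, since $G$ acts trivially on $H^2(X;\R)$ we have $H^2(X;\R)^G=H^2(X;\R)$, so $b_2^{+}(X/G)=b_2^{+}(X)=3$.

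Now I would invoke Theorem 3.3. Because $b_2^{+}(X/G)=3$ and the only subgroups of $\Z_p$ are the trivial one and $\Z_p$, the maximal subgroup $G_0\leq G$ with $b_2^{+}(X/G_0)=3$ is $G$ itself. Hence $G_0=G\equiv\Z_p$ is a symplectic $K3$ group (in particular $p\in\{2,3,5,7\}$) and the $G$-action on $X$ has the same fixed-point set structure $(X^G,\{\rho_x\})$ as an effective symplectic holomorphic $\Z_p$-action on a $K3$ surface. By the classification of symplectic automorphisms of $K3$ surfaces (Nikulin \cite{N}), such an action is pseudofree with exactly $24/(p+1)$ isolated fixed points, so $\chi(X^G)=24/(p+1)\leq 8$. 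This contradicts $\chi(X^G)=24$, and therefore no nontrivial $G$ can act, which is the assertion of the theorem. (For $p\geq 11$ one does not even need the fixed-point count, since Theorem 3.3 already fails: there is no symplectic $K3$ group isomorphic to $\Z_p$.)

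The real obstacle is Theorem 3.3 itself --- equivalently, the statement that a symplectic $\Z_p$-action on a symplectic homotopy $K3$ surface with $r_X=0$ behaves on the fixed-point level exactly like a holomorphic symplectic automorphism, in particular is pseudofree with a controlled number of fixed points. This is where the pseudoholomorphic curve techniques of \cite{CK} do the work: one chooses a $G$-equivariant, $\omega$-compatible almost complex structure, analyzes the fixed loci of $G$, and combines the $G$-signature theorem of Proposition 1.2 with the constraint $c_1(K_X)=0$. A more elementary-looking route would be to rule out $2$-dimensional components of $X^G$ directly (a fixed surface $Y$ would force the normal rotation number $c_Y$ to be trivial, a contradiction) and then to bound the number of isolated fixed points using Propositions 1.1 and 1.2; but making this honest is essentially as hard as \cite{CK}.
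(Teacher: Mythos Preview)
Your argument is logically sound: reducing to $G=\Z_p$, reading off $\chi(X^G)=24$ from homological triviality, and then invoking Theorem~3.3 to force the fixed-point structure to match that of a holomorphic symplectic automorphism (hence $\chi(X^G)\leq 8$) gives a clean contradiction. The paper itself, being a survey, does not supply a proof of Theorem~4.3; it only records that the original proof in \cite{CK} rests on determining $(X^G,\{\rho_x\})$ via pseudoholomorphic curve techniques. That is exactly the content you are importing through Theorem~3.3, so at the level of underlying mathematics your approach coincides with what the paper indicates.

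The one point worth flagging is chronological rather than mathematical: Theorem~4.3 is from \cite{CK}, while Theorem~3.3 is from the later paper \cite{CK2}. So you are deducing the earlier result from a subsequently packaged one, not reconstructing the original argument. This is harmless as a logical matter---you yourself observe that Theorem~3.3 is where the pseudoholomorphic analysis of \cite{CK} does the work---but it means your write-up is a corollary-style deduction rather than an independent proof. If you wanted to mimic the actual proof in \cite{CK}, you would carry out the direct analysis you sketch in your final paragraph: use a $G$-equivariant $\omega$-compatible $J$, show that $c_1(K_X)=0$ together with the adjunction and $G$-signature constraints rule out fixed surfaces and pin down the local representations at isolated fixed points, and then contradict $\chi(X^G)=24$. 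Your acknowledgment that ``making this honest is essentially as hard as \cite{CK}'' is exactly right.
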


The following theorem partially extends Peters' results in \cite{Peters} to the symplectic category.

\begin{thm}
(Chen-Kwasik \cite{CK})
Let $X$ be a minimal symplectic $4$-manifold with $c_1^2=0$
and $b_2^{+}>1$, which admits a homologically trival (over $\Q$), pseudofree 
with $X^{G}\neq\emptyset$,
symplectic $\Z_p$-action for a prime $p>1$. Then the following conclusions hold.
\begin{itemize}
\item [{(a)}] The action is trivial if $p\neq 1\bmod 4$, $p\neq 1\bmod 6$.
In particular, for infinitely many primes $p$ the manifold
$X$ does not admit any such nontrivial $\Z_p$-actions.
\item [{(b)}] The action is trivial as long as there is a fixed point $x$ of 
local representation $\rho_x$ contained in $SL(2,\C)$. 
\end{itemize}
\end{thm}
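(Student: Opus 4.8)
The plan is to translate the three hypotheses (homologically trivial over $\Q$; pseudofree with $X^G\neq\emptyset$; minimal symplectic with $c_1^2=0$ and $b_2^+>1$) into rigid numerical identities on the fixed-point data, and then to run these identities against the Seiberg--Witten information carried by the canonical class. Note first that a pseudofree action with $X^G\neq\emptyset$ has isolated fixed points, hence is nontrivial; so each conclusion amounts to a contradiction, i.e.\ a proof that no such action exists in the stated cases.

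\emph{Reduction.} Since the action is homologically trivial over $\Q$, in the decomposition $H^2(X)=\Z[\Z_p]^r\oplus\Z^t\oplus\Z[\mu_p]^s$ the regular and cyclotomic summands would carry nontrivial rational $\Z_p$-representations, so $r=s=0$ (vanishing of $s$ is also Proposition 1.1(3)); thus the action is trivial on $H^2(X)\otimes\Q$ and $t=b_2(X)$. By Proposition 1.1(2) and pseudofreeness, $X^G=\{m_1,\dots,m_k\}$ is a set of exactly $k=b_2(X)+2\geq 4$ isolated points (using $b_2(X)\geq b_2^+(X)\geq 2$). Averaging an $\omega$-compatible almost-complex structure over $\Z_p$, we may assume the action preserves $\omega$ together with a compatible $J$; then each $\rho_{m_i}$ lies in $U(2)$ and is recorded by an \emph{honest} unordered pair $(a_i,b_i)\in\bigl((\Z/p)^\times\bigr)^2$ (no sign ambiguity, as $J$ is fixed), with the hypothesis of part (b) reading $a_i+b_i\equiv 0\pmod p$.

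\emph{Numerical identities.} Homological triviality gives $\text{Sign}(g,X)=\text{Sign}(X)$ for every $g\in G$, so Proposition 1.2(1) --- with no surface terms, by pseudofreeness --- yields, for every $j\in\{1,\dots,p-1\}$, the identity $\sum_{i=1}^{k}\cot(ja_i\pi/p)\cot(jb_i\pi/p)=-\text{Sign}(X)$. Thus $j\mapsto\sum_i\cot(ja_i\pi/p)\cot(jb_i\pi/p)$ is a constant rational integer; averaging over $j$ and invoking the cotangent formula for Dedekind sums turns the constant part into a divisibility condition on $\sum_i s(b_ia_i^{-1},p)$, while vanishing of the Galois-fluctuating part is a strong balancing condition on the multiset of ``slopes'' $b_ia_i^{-1}\bmod p$. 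On the analytic side, $X$ being minimal symplectic with $b_2^+>1$, Taubes' theorem gives $\mathrm{SW}_X(\pm K)=\pm 1$ for the canonical class $K$, and $c_1^2=K^2=0$. Since the action fixes $K$, the equivariant Seiberg--Witten and pseudoholomorphic-curve techniques of \cite{CK} produce a second family of constraints on $\{(a_i,b_i)\}$: a holomorphic--Lefschetz-type identity $\sum_{i=1}^{k}\frac{1}{(1-\lambda^{ja_i})(1-\lambda^{jb_i})}=n$ for all $j$ (with $\lambda=e^{2\pi i/p}$ and $n$ an integer pinned down by $b_1$ and $b_2^{\pm}$, via the $G$-index of the Dirac operator of the canonical $\mathrm{spin}^c$ structure at an invariant monopole), together with --- when $\kappa(X)=1$ --- an invariant $J$-holomorphic torus $C$ Poincar\'e dual to $K$, which for $p\geq 5$ is disjoint from $X^G$ (an effective $\Z_p$-action on $T^2$ with a fixed point forces $p\in\{2,3\}$). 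Combined with $k=b_2(X)+2$, these two families of identities over-determine the data $\{(a_i,b_i)\}$.

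\emph{Conclusion.} It remains to extract a contradiction from the coupled cotangent/cosecant identities --- essentially a Dedekind-sum problem over $\Z/p$. For part (a), one shows that a nontrivial solution forces $(\Z/p)^\times$ to contain an element of order $4$ or $6$, equivalently that $-1$ or $-3$ is a quadratic residue mod $p$, i.e.\ $p\equiv 1\pmod 4$ or $p\equiv 1\pmod 6$; for every other prime the assumed action cannot exist, and since infinitely many primes are of neither form this also gives the ``in particular'' clause. For part (b), a fixed point with $b_x\equiv -a_x$ contributes $\cot^2(ja_x\pi/p)$ to the signature identity and $\tfrac14\csc^2(ja_x\pi/p)$ to the holomorphic--Lefschetz identity --- both strictly positive for every $j$ --- and feeding this positivity into the balancing conditions (together with $c_1^2=0$) leaves no room for the remaining fixed-point contributions, contradicting $k\geq 4$. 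The main obstacle is precisely this last step: pinning down the exact arithmetic dichotomy from the Dedekind-sum constraints, and checking that a single $SL(2,\C)$-type fixed point already over-determines the system. A secondary technical point is making the equivariant Seiberg--Witten input rigorous --- the existence of an invariant, suitably non-degenerate monopole (or an invariant pseudoholomorphic representative of $K$) and the control of its local behaviour at the fixed points --- which is where the machinery of \cite{CK} does the real work.
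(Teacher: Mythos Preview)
The survey paper does not actually prove this theorem; it only cites \cite{CK} and remarks afterward that ``proofs of Theorems 4.3 and 4.4 relied heavily on the knowledge of $(X^G,\{\rho_x\})$ which was obtained through the pseudoholomorphic curve techniques developed in \cite{CK}.'' So there is no detailed argument here to compare against. Your outline is consistent with that remark: you correctly assemble the standard numerical inputs (Lefschetz count $k=b_2+2$, the $G$-signature cotangent identities with $\text{Sign}(g,X)=\text{Sign}(X)$ from homological triviality) and you correctly point to Taubes/$SW$ and the equivariant pseudoholomorphic machinery of \cite{CK} as the source of the additional constraints on the local data $\{(a_i,b_i)\}$.

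That said, what you have written is a plan, not a proof, and the two steps you flag as ``the main obstacle'' are exactly the content of the theorem. For part (a) you merely assert that a nontrivial solution forces an element of order $4$ or $6$ in $(\Z/p)^\times$; you give no mechanism by which the coupled identities single out these orders, and the parenthetical about quadratic residues is decoration, not argument. For part (b) your ``positivity'' sketch does not work as stated: the $G$-signature contributions $-\cot(ja_i\pi/p)\cot(jb_i\pi/p)$ from the other fixed points have no fixed sign, and the terms $\bigl((1-\lambda^{ja_i})(1-\lambda^{jb_i})\bigr)^{-1}$ in your holomorphic--Lefschetz identity are complex for general $j$, so ``strictly positive \dots leaves no room'' is not a valid inference. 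You need the actual structural consequence from \cite{CK} --- that the invariant $J$-holomorphic representative of $K$ (a union of tori, by $c_1^2=0$ and minimality) organizes the fixed points and pins down their local types --- and then a genuine arithmetic computation, not a heuristic.

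Two smaller points. First, the ``holomorphic--Lefschetz-type identity'' you invoke is not a formal consequence of ``Taubes plus averaging''; it requires an equivariant index calculation at an invariant monopole, which is nontrivial analysis carried out in \cite{CK}. Second, your reduction paragraph slightly overstates Proposition~1.1(3): that result already gives $s=0$ from pseudofreeness with $X^G\neq\emptyset$, but the vanishing of $r$ comes from homological triviality over $\Q$, as you say; just keep the two reasons separate.
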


We remark that while the homological rigidity question aims at giving some sort of understanding 
toward Question 1.2 regarding complexity of the set of associated data $\Gamma=(\rho,X^G,
\{\rho_x\})$ of all $\Z_p$-actions on a given $4$-manifold $X$, proofs of Theorems 4.3 and 4.4 
relied heavily on the knowledge of $(X^G,\{\rho_x\})$ which was
obtained through the pseudoholomorphic curve techniques developed in \cite{CK}. 

\vspace{2mm}

Regarding homological rigidity of smooth actions, the following question remains open, and is perhaps 
the most interesting one.

\begin{ques}
For any prime number $p>2$, are there any nontrivial but homologically trivial, smooth 
$\Z_p$-actions on a homotopy $K3$ surface?
\end{ques}

For the case of an involution, it is known that there are no locally linear, homologically trivial
topological actions on a homotopy $K3$ surface (see \cite{Ma, Rub}). On the other hand, Question 4.5
should be compared with Question 4.4, as for any $p>23$, a locally linear $\Z_p$-action 
on a homotopy $K3$ surface is automatically homologically trivial by Prop 1.1(1). 

\vspace{2mm}

{\bf Acknowledgements}: Many of the results surveyed in this article were joint work with 
S\l awomir Kwasik; I wish to thank him for the collaborations on this subject. Some of the ideas or
open questions discussed here might occur or were formed during or after a
conversation with him. The paper is an expanded version of the author's talk at the 16th G\"{o}kova Geometry/Topology Conference, Turkey, May 25-30, 2009. 
I wish to thank Selman Akbulut for encouraging me to write this survey.

\end{document}